\newtheorem{theorem}{Theorem}
\newtheorem{lemma}{Lemma}
\newtheorem{proposition}{Proposition}
\newtheorem{remark}{Remark}
\newcommand{\victor}[1]{{\color{black} #1}}
\title[Involution kernel and L.D.P. on $\beta$-shifts]{On involution kernels and large deviations principles on $\beta$-shifts}
\author[Victor Vargas]{}
\subjclass[2010]{11A63, 28Dxx, 37A35, 37D35.}
\keywords{$\beta$-expansions, $\beta$-shifts, Gibbs states, involution kernel, large deviations principle, Ruelle operator.}
\email{vavargascu@gmail.com}
\thanks{Supported by FFJC-MINCIENCIAS Process 80740-628-2020.}
\begin{document}
\maketitle

\centerline{\scshape Victor Vargas}
\medskip
{\footnotesize
 \centerline{School of Mathematics - National University of Colombia}
   \centerline{Medell\'in - Colombia}
}

\begin{abstract}
Consider $\beta > 1$ and $\lfloor \beta \rfloor$ its integer part. It is widely known that any real number $\alpha \in \Bigl[0, \frac{\lfloor \beta \rfloor}{\beta - 1}\Bigr]$ can be represented in base $\beta$ using a development in series of the form $\alpha = \sum_{n = 1}^\infty x_n\beta^{-n}$, where $x = (x_n)_{n \geq 1}$ is a sequence taking values into the alphabet $\{0,\; ...\; ,\; \lfloor  \beta \rfloor\}$. The so called $\beta$-shift, \victor{denoted by $\Sigma_\beta$,} is given as the set of sequences such that all their iterates by the shift map are less than or equal to the quasi-greedy $\beta$-expansion of $1$. Fixing a H\"older continuous potential $A$, we show an explicit expression for the main eigenfunction of the Ruelle operator $\psi_A$, \victor{in order to obtain a natural extension to the bilateral $\beta$-shift of its corresponding Gibbs state $\mu_A$.} Our main goal here is to prove a first level large deviations principle for the family $(\mu_{tA})_{t>1}$ with a rate function $I$ attaining its maximum value on the union of the supports of all the maximizing measures of $A$. \victor{The above is proved through a technique using the representation of $\Sigma_\beta$ and its bilateral extension $\widehat{\Sigma_\beta}$ in terms of the quasi-greedy $\beta$-expansion of $1$ and the so called involution kernel associated to the potential $A$.} 
\end{abstract}

\section{Introduction}

The theory of large deviations arises from the study of the so called rare events modeled by stochastic phenomena. More specifically, from a probabilistic point of view, this theory is concerned with the study of rates of convergence (in an exponential scale), of the tails associated to probabilistic distributions given by the empirical averages of families of independent identically distributed random variables (see for details \cite{MR2571413} and \cite{MR2189669}). Some of the most captivating introductory works about this theory appear in \cite{MR203230} and the series of papers beginning with \cite{MR386024}, in which are introduced classical definitions of the theory of large deviations from a probabilistic flavor. In the matter of statistical mechanics, the theory of large deviations appears as a useful tool to understand exponential rates of convergence at zero temperature for families of equilibrium states associated to potentials satisfying suitable conditions of regularity. In the case when the so called rate function is defined on \victor{the space on which the dynamical system acts,} the large deviations principle is known as a first level one. \victor{On the other hand,} when the rate function \victor{depends on the Borel probability} measures defined on the space on which \victor{the dynamical system acts,} the large deviations principle is known as a second level one. \victor{There are several works where those problems were studied assuming another hypothesis on the dynamics. In particular, some interesting ones about first level large deviations principles in the contexts of Markov shifts and XY models were presented in \cite{MR3114331}, \cite{MR2210682}, \cite{BMP15} and \cite{MR3170635}. Here we are interested in the matter of $\beta$-shifts.}   

Given $\beta > 1$, the so called $\beta$-shift arises in the classical mathematical literature as the typical example of a dynamical system with topological entropy equal to $\log(\beta)$. This class of compact subshifts satisfies interesting dynamical properties and can be characterized through $\beta$-expansions of real numbers which, in particular, gives an attractive relation between number theory and symbolic dynamics. Classical contributions about this class of subshifts were presented by \victor{Erd\"os et al.}, Parry and R\'enyi in the works \cite{MR1078082}, \cite{MR0142719}, and \cite{MR0097374}. Furthermore, in this matter \victor{also have been presented fascinating contributions in the area of thermodynamical formalism.} For instance, in \cite{MR0466492} was proved existence of equilibrium states using a Ruelle operator defined on the class of $\beta$-transformations. Some time later, in \cite{MR2109476} was proved a second level large deviations principle on $\beta$-shifts without satisfying the specification property (as a particular case of dynamical \victor{systems with the} approximate product property). The foregoing, using a rate function defined on the space of \victor{Borel probability measures} and depending of the metric entropy and a lower-energy function. Subsequently, in \cite{MR3624405} was presented a second level large deviations principle using a rate function depending of the metric pressure and defined on a wide class of compact subshifts satisfying a non-uniform structure, among them, as a particular case, the so called $\beta$-shifts. 

\victor{In this work, we want to prove} a first level large deviations principle on the setting of $\beta$-shifts. The best of our knowledge is to use the characterization of the $\beta$-shift through the \victor{quasi-greedy $\beta$-expansion of the number $1$} and the so called involution kernel to obtain our main result. \victor{As far as we know, this is the first work that presents a first level large deviations principle (without using a second level one), in a context beyond the Markovian approach into the class of subshifts defined on finite alphabets. In order to do that, fixing} a potential $A$ satisfying suitable conditions, we define a Ruelle operator $L_A$ and an involution kernel $W$ and we present an explicit expression of the main eigenfunction $\psi_A$ of the Ruelle operator. Besides that, we find extensions of each one of the elements of the family of Gibbs states $(\mu_{tA})_{t>1}$ to the bilateral $\beta$-shift which satisfy interesting properties in their marginals \victor{that allow a simplification of the main problem through an extension of the rate function to the bilateral $\beta$-shift.} As a conclusion, we present a proof of a sort of large deviations principle on a family of functions converging pointwise to the rate function. It is important to point out that the rate function is defined using calibrated sub-actions associated to the potential $A$ and, as an interesting property, the so called rate function attains its maximum value on the union of the supports of all the maximizing measures.

The paper is organized as follows:

In section \ref{preliminaries-section} are presented the main definitions that will be used \victor{throughout} the paper, among them, the definition of Ruelle operator and its corresponding dual. In section \ref{involution-kernel-section} is introduced an adapted definition of involution kernel for the matter of $\beta$-shifts and are proved some technical results about the behavior of the transpose potential and the main eigenfunctions of the Ruelle operators associated to both, the potential and its corresponding transpose. In section \ref{large-deviations-section} are proved some technical lemmas and a first level large deviations principle, which is the main result of the paper.

\section{Preliminaries}
\label{preliminaries-section}

Consider a real number $\beta > 1$ and denote by $\lfloor \beta \rfloor$ its corresponding integer part. Fixing the alphabet $\mathcal{A}_\beta := \{0,\; ...\; ,\; \lfloor \beta \rfloor\}$, we denote by $\Sigma_{\lfloor \beta \rfloor}$ the set of sequences taking values into $\mathcal{A}_\beta$. For each $n \in \mathbb{N}$, we let \victor{$\mathcal{A}_\beta^n$ be the set of words of length $n$ on the alphabet $\mathcal{A}_\beta$.} Note that both of these sets can be equipped with the lexicographic order $\prec$ and both of them result in total ordered sets. 

Let $\sigma:\; \Sigma_{\lfloor \beta \rfloor} \to \Sigma_{\lfloor \beta \rfloor}$ be the {\bf shift map} given by $\sigma((x_n)_{n \geq 1}) := (x_{n+1})_{n \geq 1}$. \victor{Throughout the paper,} we call {\bf subshift} to any closed $\sigma$-invariant set $Y$ contained in $\Sigma_{\lfloor \beta \rfloor}$ with the shift map $\sigma$ acting on it. In the particular case when $Y = \Sigma_{\lfloor \beta \rfloor}$ \victor{we call $Y$ a {\bf full-shift}.} If there is no confusion, we will also denote by $\sigma$ the corresponding restriction of the shift map to the set $Y$. We say that \victor{a word $w = w_1\; ...\; w_n \in \mathcal{A}_\beta^n$ is {\bf admissible}} in the subshift $Y$, if there is a sequence $x \in Y$ containing the word $w$, that is, there exists some $k \in \mathbb{N}$ such that $w = x_k\; ...\; x_{k + n -1}$. In such a case, we say that the word $w$ is a {\bf factor} of the sequence $x$. 

\victor{Throughout the paper,} we denote by $\mathcal{L}_n(Y)$ the set of admissible words of length $n$ in $Y$. We call {\bf language} of $Y$, and denote by $\mathcal{L}(Y)$, to the set of all the admissible words in $Y$. Actually, it is not difficult to check that $\mathcal{L}(Y) = \bigcup_{n=1}^{\infty}\mathcal{L}_n(Y)$. Given $w = w_1\; ...\; w_n \in \mathcal{L}(Y)$, we define the {\bf cylinder} associated to the word $w$ as the set $[w] := \{x \in Y :\; x_1 = w_1,\; ...\; ,\; x_n = w_n\}$ and we use the notation $|w|$ for the length of the word $w$. We say that $x = (x_n)_{n \geq 1} \in \Sigma_{\lfloor \beta \rfloor}$ is a {\bf finite sequence}, if there is $n_0 \in \mathbb{N}$ such that $x_n = 0$ for each $n \geq n_0$, \victor{in the other case,} we say that $x$ is an {\bf infinite sequence}. 

Given a sequence $x \in \Sigma_{\lfloor \beta \rfloor}$, we denote by $[x]_k$ the cylinder associated to the word $x_1\; ...\; x_k$ and we use the notation $\beta_x$ for the unique real number generated by the sequence $x$ in the base $\beta$, that is, \victor{the only one satisfying} 
\[
\beta_x := \sum_{n = 1}^\infty x_n\beta^{-n} \;.
\]  

Consider a number $\alpha \in \Bigl[0, \frac{\lfloor \beta \rfloor}{\beta - 1}\Bigr]$, we say that a sequence $(x_n)_{n \geq 1} \in \Sigma_{\lfloor \beta \rfloor}$ is a {\bf $\beta$-expansion} of the number $\alpha$, when \victor{it satisfies} the equation 
\[
\alpha = \sum_{n = 1}^{\infty} x_n \beta^{-n} \;.
\]

It is widely known that the set of $\beta$-expansions associated to a real number $\alpha$ can be even \victor{an infinite set (see for instance \cite{MR1153488}). Moreover, it was proved that for any $\beta \in (1, 2)$, almost any number $\alpha \in \Bigl[0, \frac{1}{\beta - 1}\Bigr]$ has a continuum of $\beta$-expansions (see for details \cite{zbMATH02074190}).} Because of that, \victor{we call} {\bf greedy} $\beta$-expansion of a number $\alpha$ to the largest $\beta$-expansion of $\alpha$ with respect to the lexicographic order $\prec$. Observe that the greedy $\beta$-expansion can be either a finite or an infinite sequence. By the above, we call {\bf quasi-greedy} $\beta$-expansion of the number $\alpha$ to the largest infinite $\beta$-expansion of $\alpha$. It is easy to check that the greedy $\beta$-expansion always is greater than or equal to the quasi-greedy $\beta$-expansion, with equality only when the greedy $\beta$-expansion is infinite (see for details \cite{MR1020481, MR0142719}). 

Given $a \in \mathcal{A}_\beta$ and a sequence $x = (x_n)_{n \geq 1} \in \Sigma_{\lfloor \beta \rfloor}$, we will use the notation $a^{\infty} \in \Sigma_{\lfloor \beta \rfloor}$ for the constant sequence taking only the value $a \in \mathcal{A}_\beta$ and the notation $ax$ for the sequence $(a, x_1, x_2,\; ...\;) \in \Sigma_{\lfloor \beta \rfloor}$.

Now we are able to present a formal definition of the class of subshifts on which we develop the theory \victor{throughout the paper,} widely known in the mathematical literature as $\beta$-shifts (see for details \cite{MR1020481}, \cite{MR878240} and \cite{MR648108}). \victor{Let us denote by $x^\beta$ the quasi-greedy $\beta$-expansion of $1$. Then, the so called {\bf $\beta$-shift} is defined as 
\begin{equation}
\label{beta-shift}
\Sigma_\beta := \{x \in \Sigma_{\lfloor \beta \rfloor} :\; \sigma^k x \preceq x^\beta, \forall k \in \mathbb{N} \cup \{0\}\} \,,
\end{equation}
with the shift map acting on it (see for details \cite{MR1078082}).} Moreover, \victor{as a consequence} of the greedy algorithm, which was proposed in \cite{MR0142719} and \cite{MR1078082}, we can characterize $\Sigma_\beta$ through admissible words in the following way: 
\begin{equation}
\label{greedy-algorithm}
\Sigma_\beta = \Bigl\{x \in \Sigma_{\lfloor \beta \rfloor} :\; x_k\; ...\; x_{k+m-1} \prec x^\beta_1\; ...\; x^\beta_m,\; \forall k, m \in \mathbb{N} \Bigr\} \;.
\end{equation}

\victor{Furthermore, since $\sigma^k(x^\beta) \prec x^\beta$ for each $k \in \mathbb{N}$, it follows that the quasi-greedy $\beta$-expansion of $1$ belongs to the set in the right side of \eqref{greedy-algorithm}, guaranteeing that the sets in \eqref{beta-shift} and \eqref{greedy-algorithm} agree (see for details \cite{MR1020481}, \cite{MR1851269} and \cite{MR0097374}). On the other hand, it is well known that the subshift $\Sigma_\beta$ is a finite type one if, and only if, the greedy $\beta$-expansion of $1$ is a finite sequence (see for details \cite{MR1020481} and \cite{MR0142719}).} 

It is widely known that any subshift $Y \subset \Sigma_{\lfloor \beta \rfloor}$ can be equipped with the metric
\begin{equation}
\label{metric}
d(x, y) := 2^{-\min\{n \in\mathbb{N} :\; x_n \neq y_n\} + 1} \,, 
\end{equation}
and results in a compact metric space. \victor{Actually, the topology induced by \eqref{metric} agrees with the subspace topology with respect to the product topology on the full-shift $\Sigma_{\lfloor \beta \rfloor}$. In particular, the subshift $\Sigma_\beta$} results in a compact metric space when it is equipped with the metric in \eqref{metric}. 

\victor{A subshift $Y \subset \Sigma_{\lfloor \beta \rfloor}$ is called {\bf topologically transitive}, if for any pair of cylinders $[u], [v] \subset Y$, there is $n \in \mathbb{N} \cup \{0\}$ such that $\sigma^{-n}([u]) \cap [v] \neq \emptyset$. In the case that the set $\{n :\; \sigma^{-n}([u]) \cap [v] \neq \emptyset\}$ is co-finite in $\mathbb{N} \cup \{0\}$ for any pair of cylinders $[u], [v] \subset Y$, the subshift $Y \subset \Sigma_{\lfloor \beta \rfloor}$ is called {\bf topologically mixing}.} Besides that, if there is $n_0 \in \mathbb{N} \cup \{0\}$ such that \victor{for any $u, v \in \mathcal{L}(Y)$, there is a word $w \in \mathcal{A}_\beta^{n_0}$} such that the word $uwv \in \mathcal{L}(Y)$, we say that the subshift $Y \subset \Sigma_{\lfloor \beta \rfloor}$ satisfies the {\bf specification property}. 

It is not difficult to check that the specification property implies the topologically mixing property (see for details \cite{zbMATH03514123} and \cite{MR1484730}). \victor{In particular, for the matter of the so called $\beta$-shifts, existence of $n_0 \in \mathbb{N}$ such that all the words composed only by $0$'s in the quasi-greedy $\beta$-expansion of $1$ have length less than or equal to $n_0$, is equivalent to say that $\Sigma_\beta$ satisfies the specification property (see for details \cite{MR1020481} and Theorem $2$ in \cite{MR878240}). In particular, under the conditions mentioned above $\Sigma_\beta$ results in a topologically mixing compact subshift on the alphabet $\{0,\; ... \;,\; \lfloor \beta \rfloor \}$.} 

Let $Y \subset \Sigma_{\lfloor \beta \rfloor}$ be a subshift, we use the notation $\mathcal{C}(Y)$ for the set of continuous functions from $Y$ into $\mathbb{R}$ and we denote by $\mathcal{H}_\theta(Y)$, $\theta \in (0, 1]$, the set of $\theta$-H\"older continuous functions from $Y$ into $\mathbb{R}$. Consider $\varphi \in \mathcal{H}_\theta(Y)$, then, its corresponding {\bf H\"older constant} is given by
\[
\mathrm{Hol}_\varphi := \sup\Bigl\{ \frac{|\varphi(x) - \varphi(y)|}{d(x, y)^\theta} :\; x \neq y \Bigr\} \;.
\]

Fixing a constant $K > 0$, we define the set \victor{
\begin{equation}
\label{convex-set}
\Lambda_K(Y) := \Bigl\{\varphi \in \mathcal{C}(Y) :\; 0 < \varphi \leq 1,\; \frac{\varphi(x)}{\varphi(y)} \leq e^{K d(x, y)^{\theta}}, \text{ when, } ax \in Y \Leftrightarrow ay \in Y \Bigr\} \;.
\end{equation}}

It is not difficult to check that for each $K > 0$, the set $\Lambda_K(Y)$ is a convex closed subset of $\mathcal{C}(Y)$ \victor{(see for instance Lemma $3$ in \cite{MR1860762} and Theorem $2.2$ in \cite{MR1085356}).}

Given a subshift $Y \subset \Sigma_{\lfloor \beta \rfloor}$, we denote by $\mathcal{B}(Y)$ the set of finite \victor{Borel measures} on $Y$, by $\mathcal{P}(Y)$ the set of \victor{Borel probabilities} on $Y$ and by $\mathcal{P}_\sigma(Y)$ the set of \victor{Borel $\sigma$-invariant probabilities} on $Y$. In addition, given a potential $A \in \mathcal{C}(Y)$ we define the {\bf maximizing value} of $A$ as  
\begin{equation}
\label{maximizing-value}
m(A) := \sup \Bigl\{ \int_Y A d\mu :\; \mu \in \mathcal{P}_\sigma(Y) \Bigr\} \;.
\end{equation}

The set $\mathcal{P}_{\max}(A)$ of {\bf maximizing measures} is defined as  the set of $\sigma$-invariant probabilities that attain the maximizing value of $A$. That is, the ones attaining the $\sup$ in \eqref{maximizing-value}. Therefore, we have $\mu \in \mathcal{P}_{\max}(A)$ if, and only if, $\int_Y A d\mu = m(A)$. Under the assumptions that appear above, we say that a map $V \in \mathcal{C}(Y)$ is a {\bf calibrated sub-action} associated to the potential $A$ when \victor{it satisfies that}
\begin{equation}
\label{calibrated-sub-action}
m(A) = \sup\{A(z) + V(z) - V(x) :\; \sigma(z) = x \} \;.
\end{equation}

On the other hand, if $X, Y \subset \Sigma_{\lfloor \beta \rfloor}$ are subshifts, $\mu \in \mathcal{P}(X)$ and $\nu \in \mathcal{P}(Y)$, we denote the {\bf transport plan} between $\mu$ and $\nu$ by $\Gamma(\mu, \nu)$ \victor{which is defined} as the set of measures $\Pi \in \mathcal{P}(X \times Y)$ such that $\Pi(E \times Y) = \mu(E)$ and $\Pi(X \times F) = \nu(F)$ for any pair of \victor{Borel sets} $E \subset X$ and $F \subset Y$.

The so called {\bf Ruelle operator} associated to a potential $A \in \mathcal{H}_\theta(\Sigma_\beta)$ is defined as the map assigning to each $\varphi \in \mathcal{C}(\Sigma_\beta)$ the function
\begin{equation}
\label{Ruelle-operator}
L_A(\varphi)(x) := \sum_{\sigma(z) = x}e^{A(z)}\varphi(z) = \sum_{a = 0}^{\lfloor \beta - \beta_x \rfloor}e^{A(ax)}\varphi(ax) \;.
\end{equation}

It is not difficult to check that the Ruelle operator preserves the set $\Lambda_K(\Sigma_\beta)$ for any $K > 0$ (where $\Lambda_K(\; \cdot \;)$ is defined as appears in \eqref{convex-set}). Indeed, given $\psi \in \Lambda_K(\Sigma_\beta)$ and $x, y \in \Sigma_\beta$ such that $\lfloor \beta - \beta_x \rfloor = \lfloor \beta - \beta_y \rfloor$, we have 
\begin{align*}
L_A(\psi)(x) 
&= \sum_{a = 0}^{\lfloor \beta - \beta_x \rfloor}e^{A(ax)}\varphi(ax) \\
&\leq e^{2^{-1}Kd(x, y)^{\theta}}\sum_{a = 0}^{\lfloor \beta - \beta_y \rfloor}e^{A(ay)}\varphi(ay) \\
&\leq e^{Kd(x, y)^{\theta}}\sum_{a = 0}^{\lfloor \beta - \beta_y \rfloor}e^{A(ay)}\varphi(ay) 
= e^{Kd(x, y)^{\theta}}L_A(\psi)(y) \;,  
\end{align*}
which implies that $L_A(\psi) \in \Lambda_K(\Sigma_\beta)$.

The {\bf dual Ruelle operator} associated to $A \in \mathcal{H}_\theta(\Sigma_\beta)$ is given by the map that assigns to each $\mu \in \mathcal{B}(\Sigma_\beta)$ the value $L_A^*(\mu) \in \mathcal{B}(\Sigma_\beta)$, which satisfies the equation
\begin{equation*}
\int_{\Sigma_\beta}\varphi d(L_A^*(\mu)) := \int_{\Sigma_\beta} L_A(\varphi) d\mu \;,
\end{equation*}
for each $\varphi \in \mathcal{C}(\Sigma_\beta)$.

Since the shift map $\sigma:\; \Sigma_\beta \to \Sigma_\beta$ is a local homeomorphism (see for instance \cite{MR648108}). Assuming that there is $n_0 \in \mathbb{N}$ such that all the words composed only by $0$'s in the quasi-greedy $\beta$-expansion of $1$ have length less than or equal to $n_0$, it follows that the transfer operator defined in \eqref{Ruelle-operator} satisfies the Perron-Frobenius Theorem of Ruelle on the \victor{set of functions} $\Lambda_K(\Sigma_\beta)$ for the constant $K = \mathrm{Hol}_A$. That is:

\begin{remark}
\label{Perron-Frobenius}
There are $\lambda_A > 0$, a function $\psi_A \in \Lambda_{\mathrm{Hol}_A}(\Sigma_\beta)$ and a measure $\rho_A \in \mathcal{P}(\Sigma_\beta)$ such that:
\begin{enumerate}[i)]
\item $L_A(\psi_A) = \lambda_A \psi_A$; 
\item $L_A^*(\rho_A) = \lambda_A \rho_A$; 
\item $\lim_{n \to \infty}\lambda_A^{-n}L_A^n(\varphi) = \psi_A\int_{\Sigma_\beta} \varphi d\rho_A$ for any $\varphi \in \mathcal{H}_\theta(\Sigma_\beta)$. 
\end{enumerate}

Moreover, by item $iii)$, it follows that $\lambda_A$ is simple, maximal and  isolated. In addition, choosing the main eigenfunction $\psi_A$ in a smart way, we obtain that the Gibbs state $\mu_A := \psi_A d\rho_A$ belongs to $\mathcal{P}_\sigma(\Sigma_\beta)$ and results in an equilibrium state for the potential $A$ (see for details the main theorems that appear in \cite{MR1860762}, \cite{MR1860763} and \cite{MR0466492}).

\end{remark}

\section{Involution kernel}
\label{involution-kernel-section}

In this section, we propose an appropriate definition of the involution kernel for the setting of $\beta$-shifts. Besides that, we present an explicit expression for the main eigenfunction of the Ruelle operator, whose existence is guaranteed in Remark \ref{Perron-Frobenius}. The above, with the goal to obtain natural extensions to the bilateral $\beta$-shift of both, the Gibbs states $\mu_{t A}$, $t > 1$, and the accumulation points at zero temperature of the family $(\mu_{t A})_{t>1}$.
 
Given $m \in \mathbb{N}$ and $x = (x_n)_{n \geq 1},\; y = (y_n)_{n \geq 1} \in \Sigma_{\lfloor \beta \rfloor}$, we define the sequences $\tau_{y, m}(x) := (y_m,\; ...,\; y_1, x_1, x_2,\; ...\; )$ and $\tau_{y, 0}(x) := x$. Note that $\tau_{y, m}(x)$ only takes values into the alphabet $\mathcal{A}_\beta$, which implies \victor{that it also belongs} to $\Sigma_{\lfloor \beta \rfloor}$. The so called {\bf bilateral $\beta$-shift} $\widehat{\Sigma_\beta}$ is defined as 
\begin{equation}
\label{bilateral-shift}
\widehat{\Sigma_\beta} := \{ (y, x) \in \Sigma_{\lfloor \beta \rfloor} \times \Sigma_{\lfloor \beta \rfloor} :\; \tau_{y, m}(x) \in \Sigma_\beta,\; \forall m \in \mathbb{N} \cup \{0\} \} \;,
\end{equation}
with the {\bf bilateral shift map} $\widehat{\sigma} :\; \widehat{\Sigma_\beta} \to \widehat{\Sigma_\beta}$ given by $\widehat{\sigma}(y, x) := (\tau_{x, 1}(y), \sigma(x))$ acting on it. It is easy to check that the definition above coincides with the ones that appear in \cite{MR0466492} and \cite{MR648108}. Furthermore, the set $\widehat{\Sigma_\beta}$ is closed and invariant by the action of the homeomorphism $\widehat{\sigma}$ and results in a compact metric space when it is equipped with the metric
\begin{equation*}
\widehat{d}((y, x), (y', x')) := 2^{-\min\{n \in\mathbb{N} :\; y_n \neq y'_n \text{ and } x_n \neq x'_n \} + 1} \;. 
\end{equation*}

On the other hand, observe that for any sequence $\tau_{y, m}(x) \in \Sigma_\beta$, with $m \in \mathbb{N}$, \victor{the sequence} $x$ also belongs to $\Sigma_\beta$. The above, because we have $x = \sigma^m(\tau_{y, m}(x))$ and $\sigma(\Sigma_\beta) \subset \Sigma_\beta$. In fact, \victor{the above} is equivalent to say that for any pair $(y, x) \in \widehat{\Sigma_\beta}$ \victor{we have} $x \in \Sigma_\beta$.  Conversely, if the sequence $x$ belongs to $\Sigma_\beta$, we obtain that $\tau_{0^{\infty}, m}(x) \in \Sigma_\beta$ for each $m \in \mathbb{N}$. The foregoing, because $\beta_{\tau_{0^{\infty}, m}(x)} = \beta^{-m}\beta_x \leq \beta_x$, which implies that the pair $(0^{\infty}, x)$ also belongs to the so called bilateral $\beta$-shift. 

By the above, it follows that the bilateral $\beta$-shift admits a Cartesian product decomposition of the form $\widehat{\Sigma_\beta} = \Sigma_\beta^\intercal \times \Sigma_\beta$, where $\Sigma_\beta^\intercal \subset \Sigma_{\lfloor \beta \rfloor}$ results in a $\sigma$-invariant closed set which we call {\bf transpose $\beta$-shift}. In the next lemma we present a characterization of the subshift $\Sigma_\beta^\intercal$ in terms of finite admissible words in a similar way that the one given for $\Sigma_\beta$ that appears in \eqref{greedy-algorithm}, which is obtained using the so called greedy algorithm. Besides that, we use that characterization to prove our claims of closeness and $\sigma$-invariance of $\Sigma_\beta^\intercal$.  

\begin{lemma}
\label{transpose-shift-lemma}
Consider $\beta > 1$ and the Cartesian product decomposition $\widehat{\Sigma_\beta} = \Sigma_\beta^\intercal \times \Sigma_\beta$ of the bilateral $\beta$-shift. Then, the so called \victor{transpose $\beta$-shift} can be characterized in the following way
\begin{equation}
\label{transpose-shift}
\Sigma_\beta^\intercal = \Bigl\{y \in \Sigma_{\lfloor \beta \rfloor} :\; y_{k+m-1}\; ...\; y_k \prec x^\beta_1\; ...\; x^\beta_m,\; \forall k, m \in \mathbb{N} \Bigr\} \;.
\end{equation}   
\end{lemma}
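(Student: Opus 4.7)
The plan is to prove the two inclusions of \eqref{transpose-shift} by exploiting the convenient representative $(y, 0^\infty) \in \widehat{\Sigma_\beta}$. Given the Cartesian product decomposition and the already-established fact that $(0^\infty, x) \in \widehat{\Sigma_\beta}$ for every $x \in \Sigma_\beta$, membership $y \in \Sigma_\beta^\intercal$ is equivalent to $(y, 0^\infty) \in \widehat{\Sigma_\beta}$, i.e., to $\tau_{y, M}(0^\infty) \in \Sigma_\beta$ for every $M \in \mathbb{N} \cup \{0\}$.

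For the inclusion $\Sigma_\beta^\intercal \subseteq $ RHS, I would fix $y \in \Sigma_\beta^\intercal$ and arbitrary $k, m \geq 1$, set $M := k + m - 1$, and note that $\tau_{y, M}(0^\infty) = (y_M, y_{M-1}, \ldots, y_1, 0, 0, \ldots)$ lies in $\Sigma_\beta$ and therefore is $\preceq x^\beta$. Its first $m$ characters are exactly $y_{k+m-1}, y_{k+m-2}, \ldots, y_k$; if this block lexicographically dominated $x^\beta_1 \cdots x^\beta_m$ strictly, the infinite comparison would already give $\tau_{y, M}(0^\infty) \succ x^\beta$, a contradiction, so $y_{k+m-1} \cdots y_k \preceq x^\beta_1 \cdots x^\beta_m$.

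For the reverse inclusion, assuming the finite-word condition I would verify that every iterate $\sigma^p(\tau_{y, M}(0^\infty))$ is $\preceq x^\beta$. The case $p \geq M$ reduces to $0^\infty \preceq x^\beta$; for $p < M$, setting $i := M - p$, the comparison is between $(y_i, y_{i-1}, \ldots, y_1, 0, 0, \ldots)$ and $x^\beta$. I would locate the first coordinate $s^{\ast}$ of disagreement: if $s^{\ast} \leq i$, applying the hypothesis with $k := i - s^{\ast} + 1$ and $m := s^{\ast}$, together with the coincidence at coordinates strictly below $s^{\ast}$, forces $y_{i - s^{\ast} + 1} < x^\beta_{s^{\ast}}$; if $s^{\ast} > i$, the initial $i$ coordinates coincide with $x^\beta_1 \cdots x^\beta_i$, and the tail $0^\infty$ must eventually lie strictly below the tail of $x^\beta$, since the quasi-greedy $\beta$-expansion of $1$ is never eventually zero.

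The main obstacle is this final equality case: when the reversed prefix of $y$ coincides with $x^\beta_1 \cdots x^\beta_i$, the finite-word hypothesis provides no strict gap, and one must descend to the infinite tail. The infiniteness of $x^\beta$ is exactly what closes the argument, and choosing $x = 0^\infty$ as the second coordinate is what permits this clean tail analysis; a generic $x \in \Sigma_\beta$ could otherwise introduce competing initial digits that would complicate the comparison.
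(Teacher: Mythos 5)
Your proof is correct and follows the same route as the paper: both sides of \eqref{transpose-shift} are handled by passing to the test pair $(y, 0^\infty)$ and comparing reversed prefixes $y_m \cdots y_1$ of $y$ against the quasi-greedy expansion $x^\beta$. You are more explicit than the paper on the reverse inclusion, where the paper merely asserts that ``$y_m \cdots y_1 \preceq x^\beta_1 \cdots x^\beta_m$ for each $m$'' is equivalent to ``$\tau_{y, M}(0^\infty) \in \Sigma_\beta$ for each $M$'', while you carry out the case split on the first disagreeing coordinate and correctly invoke the infiniteness of $x^\beta$ to settle the boundary case of equal prefixes; on the forward inclusion, by contrast, the paper bypasses your appeal to the product equality $\widehat{\Sigma_\beta} = \Sigma_\beta^\intercal \times \Sigma_\beta$ by arguing directly from an arbitrary witness $x$ with $(y,x) \in \widehat{\Sigma_\beta}$ and applying \eqref{greedy-algorithm} to $\tau_{y, k+m-1}(x)$, which is a slightly more robust move since it does not lean on the full product decomposition.
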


\begin{proof}
First note that the set that appears in the right side of \eqref{transpose-shift} is $\sigma$-invariant. Indeed, assuming that $y$ belongs to that set, it follows that for any $k, m \in \mathbb{N}$ is satisfied $y_{k+m}\; ...\; y_{k+1} \prec x^\beta_1\; ...\; x^\beta_m$. In other words, we have that $\sigma(y)$ also belongs to the set appearing in the right side of \eqref{transpose-shift}, such as we wanted to prove. 

Hence, in order to prove the $\sigma$-invariance of the so called transpose $\beta$-shift, it is enough to prove that \eqref{transpose-shift} holds.

Assume that $(y, x) \in \widehat{\Sigma_\beta}$, by \eqref{bilateral-shift}, for each $n \in \mathbb{N}$ is satisfied $\tau_{y, n}(x) \in \Sigma_\beta$. The above implies, as a consequence of \eqref{greedy-algorithm}, that given arbitrary values $k, m \in \mathbb{N}$, we have $y_{k+m-1}\; ...\; y_k \prec x^\beta_1\; ...\; x^\beta_m \;$. Hence, we obtain that
\[
\Sigma_\beta^\intercal \subseteq \Bigl\{y \in \Sigma_{\lfloor \beta \rfloor} :\; y_{k+m-1}\; ...\; y_k \prec x^\beta_1\; ...\; x^\beta_m,\; \forall k, m \in \mathbb{N} \Bigr\} \;. 
\] 

Conversely, assuming that $y \in \Sigma_{\lfloor \beta \rfloor}$ satisfies $y_{k+m-1}\; ...\; y_k \prec x^\beta_1\; ...\; x^\beta_m$ for arbitrary values $k, m \in \mathbb{N}$. In particular, we have $y_{m}\; ...\; y_1 \prec x^\beta_1\; ...\; x^\beta_m$ for each $m \in \mathbb{N}$ which is equivalent to say that $\tau_{y, m}(0^{\infty}) \in \Sigma_\beta$ for each $m \in \mathbb{N}$. Therefore, $(y, 0^{\infty}) \in \widehat{\Sigma_\beta}$ and, thus, $y \in \Sigma_\beta^\intercal$. By the above, we have
\[
\Sigma_\beta^\intercal \supseteq \Bigl\{y \in \Sigma_{\lfloor \beta \rfloor} :\; y_{k+m-1}\; ...\; y_k \prec x^\beta_1\; ...\; x^\beta_m,\; \forall k, m \in \mathbb{N} \Bigr\} \;, 
\]
which shows that \eqref{transpose-shift} holds.

In order to finish the proof, by \eqref{transpose-shift}, we have that the closeness of $\Sigma_\beta^\intercal$ follows from the greedy algorithm in the same way that in the case of $\beta$-shifts \victor{(see for instance \cite{MR1020481}, \cite{MR1078082} and \cite{MR0142719}).}   
\end{proof}

\begin{remark}
\label{sequences}
Observe that for each pair $(y, x) \in \widehat{\Sigma_\beta}$ and any $n \in \mathbb{N}$ \victor{we have} $\widehat{\sigma}^n(y, x) = (\tau_{x, n}(y), \sigma^n(x))$ also belongs to the set $\widehat{\Sigma_\beta}$. Therefore, it follows from Lemma \ref{transpose-shift-lemma} that the sequence $\tau_{x, n}(y)$ belongs to $\Sigma_\beta^\intercal$ for any pair $(y, x) \in \widehat{\Sigma_\beta}$ and each $n \in \mathbb{N}$.
\end{remark}

Given a word $w = w_1\; ...\; w_l$, we define the {\bf transpose word} of $w$ as the one given by the expression $w^\intercal = w_l\; ...\; w_1$. Observe that for any $w \in \mathcal{L}(\Sigma_\beta)$, the word $w^\intercal$ belongs to the language $\mathcal{L}(\Sigma_\beta^\intercal)$ and, conversely, if $w \in \mathcal{L}(\Sigma_\beta^\intercal)$, it follows that the transpose word $w^\intercal$ belongs to $\mathcal{L}(\Sigma_\beta)$. Furthermore, it is not difficult to check that for any pair of words $v, w$ \victor{we have} both, $(vw)^\intercal = w^\intercal v^\intercal$ and $(w^\intercal)^\intercal = w$.   

In order to find an explicit expression for the main eigenfunction $\psi_A$ associated to $A \in \mathcal{H}_\theta(\Sigma_\beta)$, whose existence is guaranteed by the Perron-Frobenius Theorem of Ruelle \victor{(see details in \cite{MR1860762}, \cite{MR1860763} and \cite{MR0466492}),} we introduce a definition of involution kernel adapted to our matter using properties of $\beta$-expansions. However, first we need to check that the subshift $\Sigma_\beta^\intercal$ satisfies suitable conditions in order to develop the theory of Perron-Frobenius.

\begin{lemma}
\label{specification-lemma}
Consider $\beta > 1$, assume that $\Sigma_\beta$ satisfies the specification property and set the bilateral $\beta$-shift $\widehat{\Sigma_\beta}$ such as appears in \eqref{bilateral-shift}. Then, the so called transpose $\beta$-shift also satisfies the specification property.  
\end{lemma}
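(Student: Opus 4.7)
The plan is to transfer the specification property from $\Sigma_\beta$ to $\Sigma_\beta^\intercal$ directly via the word-transposition correspondence $w \leftrightarrow w^\intercal$ that has already been established in the excerpt: a finite word $w$ belongs to $\mathcal{L}(\Sigma_\beta)$ if and only if $w^\intercal \in \mathcal{L}(\Sigma_\beta^\intercal)$, and transposition reverses concatenations, i.e.\ $(vw)^\intercal = w^\intercal v^\intercal$. Combined with Lemma \ref{transpose-shift-lemma}, this is what makes the two languages interchangeable up to reading direction.

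So, let $n_0 \in \mathbb{N}$ be the specification constant for $\Sigma_\beta$, whose existence is guaranteed by the hypothesis (equivalently, by the bound on the length of runs of $0$'s in $x^\beta$ discussed after the definition of $\Sigma_\beta$). Given arbitrary $u, v \in \mathcal{L}(\Sigma_\beta^\intercal)$, the plan is first to form $v^\intercal, u^\intercal \in \mathcal{L}(\Sigma_\beta)$. By the specification property for $\Sigma_\beta$, there exists a word $w' \in \mathcal{A}_\beta^{n_0}$ such that $v^\intercal w' u^\intercal \in \mathcal{L}(\Sigma_\beta)$; note the order $v^\intercal$ then $u^\intercal$, which is exactly what is needed for the transpose step.

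Applying transposition to this admissible word and using $(vw)^\intercal = w^\intercal v^\intercal$ repeatedly yields
\[
(v^\intercal w' u^\intercal)^\intercal = u \, (w')^\intercal \, v \in \mathcal{L}(\Sigma_\beta^\intercal) \,,
\]
and $w := (w')^\intercal$ has length $n_0$. Since $u$ and $v$ were arbitrary, this exhibits the same constant $n_0$ as a specification constant for $\Sigma_\beta^\intercal$, finishing the proof.

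I do not anticipate any real obstacle here; the argument is essentially a bookkeeping exercise once one observes that transposition is an anti-isomorphism between the languages $\mathcal{L}(\Sigma_\beta)$ and $\mathcal{L}(\Sigma_\beta^\intercal)$. The one thing worth double-checking is the orientation of the concatenation in the specification step (it must be $v^\intercal$ followed by $u^\intercal$, not $u^\intercal$ followed by $v^\intercal$), because transposition swaps the roles of prefix and suffix; with the correct ordering the connecting word of length $n_0$ in $\Sigma_\beta$ becomes, after transposition, a connecting word of length $n_0$ between $u$ and $v$ in $\Sigma_\beta^\intercal$.
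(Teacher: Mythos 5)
Your argument is correct and is essentially identical to the paper's own proof: both transfer the specification constant $n_0$ from $\Sigma_\beta$ to $\Sigma_\beta^\intercal$ via the transposition anti-isomorphism $w \mapsto w^\intercal$ between the two languages, applying specification in $\Sigma_\beta$ to the reversed pair and then transposing back. The care you take with the orientation of the concatenation matches what the paper does implicitly.
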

\begin{proof}
Since $\Sigma_\beta$ has the specification property, there is $n_0 \in \mathbb{N}$ such that for each pair of words $u, w \in \mathcal{L}(\Sigma_\beta)$, there is a \victor{word $v \in \mathcal{A}_\beta^{n_0}$} such that $uvw \in \mathcal{L}(\Sigma_\beta)$. Consider a pair of words $u, w \in \mathcal{L}(\Sigma_\beta^\intercal)$, thus, we have that the corresponding transpose words $u^\intercal, w^\intercal$ belong to the language $\mathcal{L}(\Sigma_\beta)$. By the specification property, there is a \victor{word $v^\intercal \in \mathcal{A}_\beta^{n_0}$} such that $w^\intercal v^\intercal u^\intercal \in \mathcal{L}(\Sigma_\beta)$, which implies that $(w^\intercal v^\intercal u^\intercal)^\intercal = uvw \in \mathcal{L}(\Sigma_\beta^\intercal)$. That is, the subshift $\Sigma_\beta^\intercal$ also satisfies the specification property, such as we wanted to prove.
\end{proof}

Note that the above lemma implies, in particular, that the subshift $\Sigma_\beta^\intercal$ is topologically mixing under the assumption that there is $n_0 \in \mathbb{N}$ such that all the words composed only by $0$'s in the quasi-greedy $\beta$-expansion of $1$ have length less than or equal to $n_0$ \victor{(see for details \cite{MR878240}, \cite{zbMATH03514123} and \cite{MR1484730}). The foregoing is essential to prove some properties of the Ruelle operator on the subshift $\Sigma_\beta^\intercal$.} In order to do that, we need to introduce some definitions that establish a relation between the subshifts $\Sigma_\beta$ and $\Sigma_\beta^\intercal$.

We say that a map $W :\; \widehat{\Sigma_\beta} \to \mathbb{R}$ is an {\bf involution kernel} associated to the potential $A :\; \Sigma_\beta \to \mathbb{R}$, if for any pair of the form $(y, ax) \in \widehat{\Sigma_\beta}$, the potentials $\widehat{A} :\; \widehat{\Sigma_\beta} \to \mathbb{R}$ defined as $\widehat{A}(y, x) := A(x)$ and $\widehat{A^\intercal} :\; \widehat{\Sigma_\beta} \to \mathbb{R}$ given by 
\begin{equation*}
\widehat{A^\intercal}(ay, x) := \widehat{A}(y, ax) + W(y, ax) - W(ay, x) \;,
\end{equation*}
are in such a way that the potential $\widehat{A^\intercal}$ \victor{doesn't depends on} the second coordinate. \victor{Under those assumptions,} we use the notation $A^\intercal(y) := \widehat{A^\intercal}(y, x)$ and we call the potential $A^\intercal :\; \Sigma_\beta^\intercal \to \mathbb{R}$ as the {\bf transpose potential} of $A$. Some references with lots of details and examples related to the involution kernel in classical settings of the XY model, the full-shift on finite alphabets and \victor{XY models with Markovian structure appear in \cite{MR3114331}, \cite{MR2210682}, \cite{zbMATH07220150} and \cite{SV20}.}

Fixing $x' \in \Sigma_\beta$, define the map $W :\; \widehat{\Sigma_\beta} \to \mathbb{R}$ by the equation
\begin{equation}
\label{involution-kernel}
W(y, x) := \sum_{n = 1}^\infty A(\tau_{y, n}(x)) - A(\tau_{y, n}(x')) \,.
\end{equation}  

By \eqref{bilateral-shift}, the map $W$ is well defined on $\widehat{\Sigma_\beta}$ and an easy calculation shows that it is an involution kernel associated to $A$ (see for details \cite{MR2210682}). Furthermore, if $A \in \mathcal{H}_\theta(\Sigma_\beta)$, it follows that $W \in \mathcal{H}_\theta(\widehat{\Sigma_\beta})$, which also implies that the corresponding transpose potential $A^\intercal$ preserves the conditions of regularity of the potential $A$ via the involution kernel that appears in \eqref{involution-kernel}. 

Observe that the shift map $\sigma :\; \Sigma_\beta^\intercal \to \Sigma_\beta^\intercal$ is a local homeomorphism. Indeed, for each $y \in \Sigma_\beta^\intercal$ and any $m \in \mathbb{N}$ \victor{we have} $\tau_{y, m}(x)$ belongs to $\Sigma_\beta$ when $x \in \Sigma_\beta$. \victor{The above is equivalent to saying that} for any $y \in \Sigma_\beta^\intercal$ the sequence $\tau_{y, m}(x^\beta)$ belongs to $\Sigma_\beta$, which implies that the sequence $ay$ belongs to $\Sigma_\beta^\intercal$ for each $a \in \{0,\; ...\; ,\; x^\beta_1\}$. \victor{By the above, it follows that the map $\sigma :\; \Sigma_\beta^\intercal \to \Sigma_\beta^\intercal$ has $x^\beta_1 + 1$ inverse branches. In particular, the map $\sigma|_{[a]} :\; [a] \subset \Sigma_\beta^\intercal \to \Sigma_\beta^\intercal$ results in a homeomorphism for each $a \in \{0,\; ...\; ,\; x^\beta_1\}$. Therefore,} we can define the Ruelle operator on the transpose $\beta$-shift, as the map that assigns to each $\varphi^\intercal \in \mathcal{C}(\Sigma_\beta^\intercal)$ the value \victor{
\begin{equation*}
L_{A^\intercal}(\varphi^\intercal)(y) := \sum_{\sigma(z) = y}e^{A^\intercal(z)}\varphi^\intercal(z) = \sum_{a = 0}^{x^\beta_1}e^{A^\intercal(ay)}\varphi^\intercal(ay) \;. 
\end{equation*}}

Furthermore, it is not difficult to check that the transfer operator $L_{A^\intercal}$ preserves the set $\Lambda_{\mathrm{Hol}_{A^\intercal}}(\Sigma_\beta^\intercal)$ (where $\Lambda_K(\; \cdot \;)$ is defined as in \eqref{convex-set}). Hence, assuming that there exists $n_0 \in \mathbb{N}$ such that all the words composed only by $0$'s in the quasi-greedy $\beta$-expansion of $1$ have length less than or equal to $n_0$, which implies specification on $\Sigma_\beta$ (see for details \cite{MR878240}), \victor{thus, by Lemma \ref{specification-lemma},} it follows that $L_{A^\intercal}$ satisfies the Perron-Frobenius Theorem of Ruelle. That is:
\begin{remark} 
\label{Perron-Frobenius-transpose}
There are $\lambda_{A^\intercal} > 0$, a function $\psi_{A^\intercal} \in \Lambda_{\mathrm{Hol}_{A^\intercal}}(\Sigma_\beta^\intercal)$ and a measure $\rho_{A^\intercal} \in \mathcal{P}(\Sigma_\beta^\intercal)$ such that:
\begin{enumerate}[i)] 
\item $L_{A^\intercal}(\psi_{A^\intercal}) = \lambda_{A^\intercal} \psi_{A^\intercal}$;
\item $L_{A^\intercal}^*(\rho_{A^\intercal}) = \lambda_{A^\intercal} \rho_{A^\intercal}$;
\item $\lim_{n \to \infty}\lambda_{A^\intercal}^{-n}L_{A^\intercal}^n(\varphi^\intercal) = \psi_{A^\intercal}\int_{\Sigma_\beta^\intercal} \varphi^\intercal d\rho_{A^\intercal}$ for any $\varphi^\intercal \in \mathcal{H}_\theta(\Sigma_\beta^\intercal)$.
\end{enumerate} 

In this case, also as a consequence of item $iii)$, it follows by \victor{a straightforward argument} that the eigenvalue $\lambda_{A^\intercal}$ is simple, maximal and isolated. Moreover, choosing a suitable main eigenfunction $\psi_{A^\intercal}$, it follows that the Gibbs state $\mu_{A^\intercal} := \psi_{A^\intercal} d\rho_{A^\intercal}$ belongs to $\mathcal{P}_\sigma(\Sigma_\beta^\intercal)$ and results in an equilibrium state for the transpose potential $A^\intercal$ (see for details the main theorems in \cite{MR1860762} and \cite{MR1860763}).
\end{remark}

In the following lemma we present an interesting relation between the Ruelle operator associated to a potential $A \in \mathcal{H}_\theta(\Sigma_\beta)$ and the Ruelle operator associated to its corresponding transpose $A^\intercal \in \mathcal{H}_\theta(\Sigma_\beta^\intercal)$, which will be useful at the moment to find an explicit expression for the main eigenfunction of the operator $L_A$. 

\begin{lemma}
\label{lemma-duality}
Consider $\beta > 1$ and $A \in \mathcal{H}_\theta(\Sigma_\beta)$. Let $W \in \mathcal{H}_\theta(\widehat{\Sigma_\beta})$ be an involution kernel associated to $A$ and let $A^\intercal \in \mathcal{H}_\theta(\Sigma_\beta^\intercal)$ be its corresponding transpose potential. Then, for any $(y, x) \in \widehat{\Sigma_\beta}$, \victor{we obtain that}
\[
L_{A^\intercal}({\bf 1}_{\widehat{\Sigma_\beta}}(\; \cdot, x)e^{W(\; \cdot, x)})(y) = L_A({\bf 1}_{\widehat{\Sigma_\beta}}(y,\; \cdot)e^{W(y,\; \cdot)})(x) \,.
\]  
\end{lemma}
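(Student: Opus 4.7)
The strategy is to expand both Ruelle operators via their definitions and to match the two sums term by term, combining two key identities: the symmetry ${\bf 1}_{\widehat{\Sigma_\beta}}(ay,x) = {\bf 1}_{\widehat{\Sigma_\beta}}(y,ax)$ of the bilateral-shift indicator under moving a digit from one side to the other, and the defining equation of the involution kernel. Unwinding the definitions, the left-hand side equals
\[
\sum_{a=0}^{x_1^\beta} {\bf 1}_{\widehat{\Sigma_\beta}}(ay,x)\, e^{A^\intercal(ay) + W(ay,x)},
\]
while the right-hand side equals
\[
\sum_{b=0}^{\lfloor \beta - \beta_x \rfloor} {\bf 1}_{\widehat{\Sigma_\beta}}(y,bx)\, e^{A(bx) + W(y,bx)}.
\]

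The first step is the indicator identity. Since $(ay)_1 = a$ and $(ay)_{k+1} = y_k$ for $k \geq 1$, a direct computation yields $\tau_{ay,m}(x) = \tau_{y,m-1}(ax)$ for every $m \geq 1$, while $\tau_{ay,0}(x) = x$. Reading off \eqref{bilateral-shift}, the family of constraints $\{\tau_{ay,m}(x) \in \Sigma_\beta : m \geq 0\}$ therefore reduces to $\{x \in \Sigma_\beta\} \cup \{\tau_{y,k}(ax) \in \Sigma_\beta : k \geq 0\}$, which coincides with $\{\tau_{y,m}(ax) \in \Sigma_\beta : m \geq 0\}$ since $ax \in \Sigma_\beta$ already forces $x = \sigma(ax) \in \Sigma_\beta$. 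Hence $(ay,x) \in \widehat{\Sigma_\beta}$ if and only if $(y,ax) \in \widehat{\Sigma_\beta}$.

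The second step is the involution-kernel identity: the definition of the transpose potential gives $\widehat{A^\intercal}(ay,x) = \widehat{A}(y,ax) + W(y,ax) - W(ay,x)$, equivalently $A^\intercal(ay) + W(ay,x) = A(ax) + W(y,ax)$, valid whenever $(y,ax) \in \widehat{\Sigma_\beta}$, i.e., on the support of the common indicator. Combining the two identities, every surviving term of either sum equals ${\bf 1}_{\widehat{\Sigma_\beta}}(y,ax)\, e^{A(ax) + W(y,ax)}$, so both sums collapse to $\sum_{a \in \mathcal{A}_\beta} {\bf 1}_{\widehat{\Sigma_\beta}}(y,ax)\, e^{A(ax) + W(y,ax)}$. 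Indeed, any $a$ outside $\{0,\ldots,\lfloor \beta - \beta_x \rfloor\}$ forces $ax \notin \Sigma_\beta$ and hence $(y,ax) \notin \widehat{\Sigma_\beta}$, and any $a$ outside $\{0,\ldots,x_1^\beta\}$ forces $ay \notin \Sigma_\beta^\intercal$ and hence $(ay,x) \notin \widehat{\Sigma_\beta}$. The main delicacy of the argument is precisely this bookkeeping, which rests on the product decomposition $\widehat{\Sigma_\beta} = \Sigma_\beta^\intercal \times \Sigma_\beta$ and the characterizations of its two factors from \eqref{greedy-algorithm} and \eqref{transpose-shift}.
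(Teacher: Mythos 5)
Your proof is correct and follows essentially the same route as the paper: expand both Ruelle operators, use the symmetry ${\bf 1}_{\widehat{\Sigma_\beta}}(ay,x) = {\bf 1}_{\widehat{\Sigma_\beta}}(y,ax)$ together with the defining relation $A^\intercal(ay) + W(ay,x) = A(ax) + W(y,ax)$ of the involution kernel, and reconcile the summation ranges. The only stylistic difference is that you verify the indicator symmetry by a direct computation with $\tau_{\cdot,m}$, whereas the paper deduces it from the $\widehat{\sigma}$-invariance of $\widehat{\Sigma_\beta}$ and the fact that $\widehat{\sigma}$ is a homeomorphism; both are sound.
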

\begin{proof} 
First note that the maps $e^{(\widehat{A^\intercal} + W)}$ and $e^{(\widehat{A} + W)}$ can be extended to continuous functions and, thus, bounded functions defined on $\Sigma_{\lfloor \beta \rfloor} \times \Sigma_{\lfloor \beta \rfloor}$. Therefore, we can guarantee that the maps ${\bf 1}_{\widehat{\Sigma_\beta}}e^{(\widehat{A^\intercal} + W)}$ and ${\bf 1}_{\widehat{\Sigma_\beta}}e^{(\widehat{A} + W)}$ are well defined and are equal to $0$ for any pair $(y, x) \in (\Sigma_{\lfloor \beta \rfloor} \times \Sigma_{\lfloor \beta \rfloor}) \setminus \widehat{\Sigma_\beta}$. 

Besides that, since $\widehat{\Sigma_\beta}$ is a $\widehat{\sigma}$-invariant set and $\widehat{\sigma}$ is a homeomorphism, we obtain that for any pair of the form $(y, ax) \in \Sigma_{\lfloor \beta \rfloor} \times \Sigma_{\lfloor \beta \rfloor}$ \victor{we have} 
\[
{\bf 1}_{\widehat{\Sigma_\beta}}(ay, x) = {\bf 1}_{\widehat{\Sigma_\beta}}(y, ax) \;.
\] 

Hence, fixing a pair $(y, x) \in \widehat{\Sigma_\beta}$ we obtain that \victor{
\begin{align*}
L_{A^\intercal}({\bf 1}_{\widehat{\Sigma_\beta}}(\; \cdot, x)e^{W(\; \cdot, x)})(y) 
&= \sum_{a = 0}^{x^\beta_1} {\bf 1}_{\widehat{\Sigma_\beta}}(ay, x) e^{(\widehat{A^\intercal} + W)(ay, x)} \\
=& \sum_{a = 0}^{x^\beta_1} {\bf 1}_{\widehat{\Sigma_\beta}}(y, ax) e^{(\widehat{A} + W)(y, ax)} \\
=& \sum_{a = 0}^{\lfloor \beta - \beta_x \rfloor} {\bf 1}_{\widehat{\Sigma_\beta}}(y, ax) e^{(\widehat{A} + W)(y, ax)}
= L_A({\bf 1}_{\widehat{\Sigma_\beta}}(y,\; \cdot) e^{W(y,\; \cdot)})(x) \,,
\end{align*}}
where the second last equality follows from the fact that $x \in \Sigma_\beta$. The above, because ${\bf 1}_{\widehat{\Sigma_\beta}}(y, ax) \neq 0$ if, and only if, $a \in \{0,\; ...\; ,\; \lfloor \beta - \beta_x \rfloor\}$, which is a consequence of the Cartesian product decomposition $\widehat{\Sigma_\beta} = \Sigma_\beta^\intercal \times \Sigma_\beta$ \victor{proved above and the fact $\lfloor \beta - \beta_x \rfloor \leq x^\beta_1$.}
\end{proof}

In the following lemma we present an explicit expression for the main eigenfunctions associated to the operators $L_A$ and $L_{A^\intercal}$ defined above. However, before that, it is important to point out that the following result uses strongly the decomposition of the bilateral $\beta$-shift as Cartesian product of the subshifts $\Sigma_\beta^\intercal$ and $\Sigma_\beta$, joint with the fact that specification in the $\beta$-shift implies specification in the so called transpose $\beta$-shift \victor{(which was proved in Lemma \ref{specification-lemma}).}

\begin{lemma}
\label{lemma-eigenfunction}
Consider $\beta > 1$ and \victor{assume that $\Sigma_\beta$ satisfies the specification property}. Let $W \in \mathcal{H}_\theta(\widehat{\Sigma_\beta})$ be an involution kernel associated to $A \in \mathcal{H}_\theta(\Sigma_\beta)$ with $A^\intercal \in \mathcal{H}_\theta(\Sigma_\beta^\intercal)$ its corresponding transpose potential. Set $\psi :\; \Sigma_\beta \to \mathbb{R}$ and $\psi^\intercal :\; \Sigma_\beta^\intercal \to \mathbb{R}$ as 
\begin{equation*}
\psi := \int_{\Sigma_\beta^\intercal} {\bf 1}_{\widehat{\Sigma_\beta}}(y, \cdot) e^{W (y, \cdot)} d\rho_{A^\intercal}(y) \;,
\end{equation*}
and 
\begin{equation*}
\psi^\intercal := \int_{\Sigma_\beta} {\bf 1}_{\widehat{\Sigma_\beta}}(\cdot, x) e^{W (\cdot, x)} d\rho_{A}(x) \;.
\end{equation*} 

Then, $\psi$ belongs to $\Lambda_{\mathrm{Hol}_A} (\Sigma_\beta)$ and results in a main eigenfunction of the operator $L_A$. Similarly, $\psi^\intercal$ belongs to $\Lambda_{\mathrm{Hol}_{A^\intercal}}(\Sigma_\beta^\intercal)$ and results in a main eigenfunction of the operator $L_{A^\intercal}$. 
\end{lemma}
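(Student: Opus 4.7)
The plan is to establish the eigenfunction equation via Fubini and the duality of Lemma \ref{lemma-duality}, and then verify the regularity conditions defining $\Lambda_{\mathrm{Hol}_A}(\Sigma_\beta)$. For the eigenfunction identity, applying $L_A$ to $\psi$ and interchanging the Ruelle operator (a finite sum) with the integral over $\rho_{A^\intercal}$ gives
\[
L_A(\psi)(x) = \int_{\Sigma_\beta^\intercal} L_A\bigl(\mathbf{1}_{\widehat{\Sigma_\beta}}(y,\;\cdot\;) e^{W(y,\;\cdot\;)}\bigr)(x)\, d\rho_{A^\intercal}(y).
\]
Lemma \ref{lemma-duality} rewrites the inner expression as $L_{A^\intercal}\bigl(\mathbf{1}_{\widehat{\Sigma_\beta}}(\;\cdot\;,x) e^{W(\;\cdot\;,x)}\bigr)(y)$, and the dual eigenfunction equation $L_{A^\intercal}^*\rho_{A^\intercal} = \lambda_{A^\intercal}\rho_{A^\intercal}$ of Remark \ref{Perron-Frobenius-transpose} produces $L_A(\psi) = \lambda_{A^\intercal}\psi$. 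An identical computation, swapping the roles of $\Sigma_\beta$ and $\Sigma_\beta^\intercal$ and using Remark \ref{Perron-Frobenius}, gives $L_{A^\intercal}(\psi^\intercal) = \lambda_A \psi^\intercal$. Since both functions are strictly positive (see below), the simplicity of the leading eigenvalue forces $\lambda_A = \lambda_{A^\intercal}$, so each is a main eigenfunction of the corresponding Ruelle operator.

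To verify $\psi \in \Lambda_{\mathrm{Hol}_A}(\Sigma_\beta)$, continuity follows from dominated convergence, using continuity and boundedness of $W$ on the compact set $\widehat{\Sigma_\beta}$. Positivity follows from the full support of $\rho_{A^\intercal}$ (guaranteed by the Perron--Frobenius theorem applied via Lemma \ref{specification-lemma}) together with the observation, made earlier in the paper, that $(0^\infty, x) \in \widehat{\Sigma_\beta}$ for every $x \in \Sigma_\beta$, so the slice $\{y \in \Sigma_\beta^\intercal :\; (y,x) \in \widehat{\Sigma_\beta}\}$ contains a nonempty cylinder around $0^\infty$. For the Hölder-type estimate, I fix $x, x' \in \Sigma_\beta$ with $ax \in \Sigma_\beta \Leftrightarrow ax' \in \Sigma_\beta$ for every $a \in \mathcal{A}_\beta$, and show that $\mathbf{1}_{\widehat{\Sigma_\beta}}(y, x) = \mathbf{1}_{\widehat{\Sigma_\beta}}(y, x')$ for every $y$; this exploits that admissibility of $\tau_{y,m}(x)$ is decided by finite-length comparisons with the quasi-greedy $\beta$-expansion of $1$ and that $x$, $x'$ share a sufficiently long prefix. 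Then the telescoping series \eqref{involution-kernel} combined with the $\theta$-Hölder regularity of $A$ yields $|W(y, x) - W(y, x')| \leq \mathrm{Hol}_A\, d(x, x')^\theta$ (after absorbing a geometric factor into the constant), from which $\psi(x)/\psi(x') \leq e^{\mathrm{Hol}_A d(x, x')^\theta}$ is immediate. The normalization $\psi \mapsto \psi/\|\psi\|_\infty$, permitted by the eigenfunction equation, then gives $0 < \psi \leq 1$.

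The statement for $\psi^\intercal$ follows by the symmetric computation, replacing $(L_A, \rho_A, \Sigma_\beta)$ with $(L_{A^\intercal}, \rho_{A^\intercal}, \Sigma_\beta^\intercal)$ and Remark \ref{Perron-Frobenius-transpose} with Remark \ref{Perron-Frobenius}; this is legitimate precisely because Lemma \ref{specification-lemma} transfers the specification hypothesis to the transpose subshift. The principal technical obstacle is the indicator identity under the compatibility assumption: for pairs $x, x'$ that satisfy the $\Lambda_K$ membership condition but are not in the same length-$n_0$ cylinder, extending the bound may require an intermediate step combining the close-pair estimate with the uniform positivity and boundedness of $\psi$ to absorb the discrete jump of the indicator into the constant $\mathrm{Hol}_A$.
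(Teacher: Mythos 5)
Your derivation of the eigenfunction equation is exactly the paper's: apply the dual relation $L_{A^\intercal}^*\rho_{A^\intercal}=\lambda_{A^\intercal}\rho_{A^\intercal}$, swap the finite sum with the integral, and invoke Lemma~\ref{lemma-duality}; the identification $\lambda_A=\lambda_{A^\intercal}$ via strict positivity of the resulting eigenfunction is also the same step the paper takes. The divergence, and the gap, is in how you try to put $\psi$ into $\Lambda_{\mathrm{Hol}_A}(\Sigma_\beta)$.

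You attempt a direct verification from the integral formula, and you yourself flag the obstacle: you would need the indicator identity $\mathbf{1}_{\widehat{\Sigma_\beta}}(y,x)=\mathbf{1}_{\widehat{\Sigma_\beta}}(y,x')$ for every $y$ under the mere hypothesis $ax\in\Sigma_\beta\Leftrightarrow ax'\in\Sigma_\beta$ (for all $a$), but that hypothesis controls only the one-step inverse branches, i.e.\ $\lfloor\beta-\beta_x\rfloor=\lfloor\beta-\beta_{x'}\rfloor$, whereas membership of $(y,x)$ in $\widehat{\Sigma_\beta}$ depends on $\tau_{y,m}(x)\in\Sigma_\beta$ for \emph{all} $m$; the $\Lambda_K$ premise does not force $x$ and $x'$ to share a long prefix, so the slices $\{y:(y,x)\in\widehat{\Sigma_\beta}\}$ and $\{y:(y,x')\in\widehat{\Sigma_\beta}\}$ need not coincide. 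Separately, even on the good part of the domain, the telescoping estimate from \eqref{involution-kernel} gives $|W(y,x)-W(y,x')|\le \mathrm{Hol}_A\,\bigl(\sum_{n\ge1}2^{-n\theta}\bigr)\,d(x,x')^\theta$, and for $\theta<1$ the geometric factor $\sum_{n\ge1}2^{-n\theta}=\frac{2^{-\theta}}{1-2^{-\theta}}$ exceeds $1$; ``absorbing it into the constant'' changes $\mathrm{Hol}_A$ to a larger constant, so you would land in some $\Lambda_K$ with $K>\mathrm{Hol}_A$, not the set claimed in the statement.

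The paper avoids both problems with one short move that is already available to you once you have $L_A\psi=\lambda_A\psi$ and $\psi>0$: since $\lambda_A$ is a \emph{simple} eigenvalue (Remark~\ref{Perron-Frobenius}) with eigenfunction $\psi_A\in\Lambda_{\mathrm{Hol}_A}(\Sigma_\beta)$, any strictly positive eigenfunction is a positive scalar multiple of $\psi_A$, so $\psi=\kappa\psi_A$ and the $\Lambda_{\mathrm{Hol}_A}$ membership (after rescaling) is inherited with the exact constant, no direct Hölder computation needed. I recommend replacing your direct-verification paragraph with this simplicity argument; your positivity observation (using $(0^\infty,x)\in\widehat{\Sigma_\beta}$ and full support of $\rho_{A^\intercal}$) is the right supporting ingredient for it.
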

\begin{proof}
By the Cartesian product decomposition $\widehat{\Sigma_\beta} = \Sigma_\beta^\intercal \times \Sigma_\beta$ of the bilateral $\beta$-shift and Lemma \ref{lemma-duality}, we obtain that for each $x \in \Sigma_\beta$ is satisfied
\begin{align*}
\psi(x) 
&= \int_{\Sigma_\beta^\intercal} {\bf 1}_{\widehat{\Sigma_\beta}}(y, x)e^{W (y, x)}d\rho_{A^\intercal}(y) \\
&= \frac{1}{\lambda_{A^\intercal}}\int_{\Sigma_\beta^\intercal} {\bf 1}_{\widehat{\Sigma_\beta}}(y, x) e^{W(y, x)}d(L^*_{A^\intercal}\rho_{A^\intercal})(y) \\
&= \frac{1}{\lambda_{A^\intercal}}\int_{\Sigma_\beta^\intercal}L_{A^\intercal}({\bf 1}_{\widehat{\Sigma_\beta}}(\; \cdot, x)e^{W(\; \cdot, x)})(y)d\rho_{A^\intercal}(y) \\
&= \frac{1}{\lambda_{A^\intercal}}\int_{\Sigma_\beta^\intercal}L_A({\bf 1}_{\widehat{\Sigma_\beta}}(y,\; \cdot)e^{W(y,\; \cdot)})(x)d\rho_{A^\intercal}(y) \\
&= \frac{1}{\lambda_{A^\intercal}}L_A\Bigl(\int_{\Sigma_\beta^\intercal}\,{\bf 1}_{\widehat{\Sigma_\beta}}(y,\; \cdot) e^{\,W_A(y,\; \cdot)}d\rho_{A^\intercal}(y)\Bigr)(x) 
= \frac{1}{\lambda_{A^\intercal}}L_A(\psi)(x) \,.
\end{align*}

Thus, in order to conclude the proof of the first part of this lemma, we only need to show that $\lambda_{A^\intercal} = \lambda_A$ and prove that $\psi \in \Lambda_{\mathrm{Hol}_A} (\Sigma_\beta)$. The first claim is a consequence of the existence of the limit in the item $iii)$ of Remark \ref{Perron-Frobenius} and the fact that $\psi > 0$. The above, because that limit implies that the unique strictly positive eigenfunctions of the operator $L_A$ are the ones associated to the main eigenvalue $\lambda_A$ (see for details chapter $2$ in \cite{MR1085356}). In addition, since the eigenvalue $\lambda_A$ is also a simple eigenvalue, it follows that $\psi = \kappa\psi_A$ and, thus, $\psi \in \Lambda_{\mathrm{Hol}_A} (\Sigma_\beta)$, which proves our second claim.

The proof of the second part of this lemma, i.e., that the function $\psi^\intercal$ is a main eigenfunction of the operator $L_{A^\intercal}$ belonging to the set $\Lambda_{\mathrm{Hol}_{A^\intercal}} (\Sigma_\beta^\intercal)$ follows in the \victor{same way as} the one that appears above, but as a consequence of Remark \ref{Perron-Frobenius-transpose}.
\end{proof}

Note that the eigenfunction $\psi$ obtained in Lemma \ref{lemma-eigenfunction} is not necessarily normalized in the sense that $\int_{\Sigma_\beta} \psi d\rho_A = 1$, which is essential at the moment to define the Gibbs state associated to the potential $A$. However, defining the constant
\begin{equation}
\label{normalized-constant}
c_A := \log\left(\int_{\widehat{\Sigma_\beta}} e^W d(\rho_{A^\intercal} \times \rho_A)\right) \;,
\end{equation}
we obtain that 
\begin{equation}
\label{normalized-eigenfunction}
\int_{\Sigma_\beta^\intercal} e^{W(y,\; \cdot) - c_A}d\rho_{A^\intercal}(y) = \psi_A \;,
\end{equation} 
where $\psi_A$ satisfies the properties that appear in Remark \ref{Perron-Frobenius}, among them, the desired normalized property in the sense that $\int_{\Sigma_\beta} \psi_A d\rho_A = 1$. 

\victor{The next proposition} presents a natural extension of both, the Gibbs state $\mu_A \in \mathcal{P}_\sigma(\Sigma_\beta)$ that appears in Remark \ref{Perron-Frobenius} and the Gibbs state $\mu_{A^\intercal} \in \mathcal{P}_\sigma(\Sigma_\beta^\intercal)$ that appears in Remark \ref{Perron-Frobenius-transpose}, to a probability measure $\mu_{\widehat{A}}$ defined on the \victor{Borel sets} of the so called bilateral $\beta$-shift which, in particular, belongs to the transport plan $\Gamma(\mu_{A^\intercal}, \mu_A)$. The above is an interesting property of this couple of Gibbs states that \victor{will be useful to prove} the large deviations principle that appears in section \ref{large-deviations-section}. 

\begin{proposition}
\label{natural-extension-proposition}
Consider $\beta > 1$ and \victor{assume that $\Sigma_\beta$ satisfies the specification property.} Let $W \in \mathcal{H}_\theta(\widehat{\Sigma_\beta})$ be an involution kernel associated to $A \in \mathcal{H}_\theta(\Sigma_\beta)$ with $A^\intercal \in \mathcal{H}_\theta(\Sigma_\beta^\intercal)$ its corresponding transpose potential and $c_A$ a constant such as appears in \eqref{normalized-constant}. Define the measure $\mu_{\widehat{A}} \in \mathcal{P}(\widehat{\Sigma_\beta})$ as
\begin{equation*}
\mu_{\widehat{A}} := e^{W - c_A}d(\rho_{A^\intercal} \times \rho_A) \;.
\end{equation*} 

Then, $\mu_{\widehat{A}}$ belongs to the transport plan $\Gamma(\mu_{A^\intercal}, \mu_{A})$ and the family $(\mu_{t\widehat{A}})_{t > 1}$ has an accumulation point $\widehat{\mu}_\infty$ at $\infty$ belonging to the transport plan $\Gamma(\mu_\infty^\intercal, \mu_\infty)$, where $\mu_\infty$ is an accumulation point of the family $(\mu_{tA})_{t > 1}$ at $\infty$ and $\mu_\infty^\intercal$ is an accumulation point of the family $(\mu_{tA^\intercal})_{t > 1}$ at $\infty$. Furthermore, we have
\begin{equation}
\label{bilateral-maximizing-value}
\int_{\widehat{\Sigma_\beta}}\widehat{A} d\widehat{\mu}_\infty = m(A) \;,
\end{equation} 
where $m(A)$ is the maximizing value that appears defined in \eqref{maximizing-value}.
\end{proposition}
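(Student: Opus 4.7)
The plan has three pieces, matching the three claims of the proposition: verify the marginals of $\mu_{\widehat{A}}$, extract accumulation points compatible with those of $(\mu_{tA})_{t>1}$ and $(\mu_{tA^\intercal})_{t>1}$, and identify the integral of $\widehat{A}$ against the limit with $m(A)$.

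First I would check that $\mu_{\widehat{A}} \in \Gamma(\mu_{A^\intercal}, \mu_A)$. For any $\varphi \in \mathcal{C}(\Sigma_\beta)$, Fubini applied to the product measure $\rho_{A^\intercal} \times \rho_A$ gives
\begin{equation*}
\int_{\widehat{\Sigma_\beta}} \varphi(x)\, d\mu_{\widehat{A}}(y,x) = \int_{\Sigma_\beta} \varphi(x) \left( \int_{\Sigma_\beta^\intercal} {\bf 1}_{\widehat{\Sigma_\beta}}(y,x)\, e^{W(y,x) - c_A}\, d\rho_{A^\intercal}(y) \right) d\rho_A(x) \,,
\end{equation*}
and by \eqref{normalized-eigenfunction} the inner integral equals $\psi_A(x)$, so the whole expression is $\int \varphi\, d\mu_A$. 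The other marginal is handled symmetrically: the transpose analogue of \eqref{normalized-eigenfunction}, namely $\int_{\Sigma_\beta} {\bf 1}_{\widehat{\Sigma_\beta}}(y,\,\cdot\,)\, e^{W(y,\,\cdot\,) - c_A}\, d\rho_A = \psi_{A^\intercal}(y)$, follows from the same computation as Lemma \ref{lemma-eigenfunction} combined with the symmetry of $c_A$ (the constant is the same because the integrand is the same). This yields $\pi_1^\ast\mu_{\widehat{A}} = \mu_{A^\intercal}$ and $\pi_2^\ast\mu_{\widehat{A}} = \mu_A$.

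Next, for the accumulation point, I would invoke compactness. Since $\widehat{\Sigma_\beta}$ is a compact metric space, $\mathcal{P}(\widehat{\Sigma_\beta})$ is weak$^\ast$ compact, so there is a sequence $t_n \to \infty$ with $\mu_{t_n \widehat{A}} \rightharpoonup \widehat{\mu}_\infty$. By extracting further subsequences, also $\mu_{t_n A} \rightharpoonup \mu_\infty$ in $\mathcal{P}(\Sigma_\beta)$ and $\mu_{t_n A^\intercal} \rightharpoonup \mu_\infty^\intercal$ in $\mathcal{P}(\Sigma_\beta^\intercal)$. Weak$^\ast$ convergence is preserved by pushforward under the continuous projections $\pi_1, \pi_2$, and by the first part $\pi_1^\ast \mu_{t_n \widehat{A}} = \mu_{t_n A^\intercal}$ and $\pi_2^\ast \mu_{t_n \widehat{A}} = \mu_{t_n A}$. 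Passing to the limit gives $\pi_1^\ast \widehat{\mu}_\infty = \mu_\infty^\intercal$ and $\pi_2^\ast \widehat{\mu}_\infty = \mu_\infty$, i.e.\ $\widehat{\mu}_\infty \in \Gamma(\mu_\infty^\intercal, \mu_\infty)$.

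Finally, for \eqref{bilateral-maximizing-value}, observe that $\widehat{A}(y,x) = A(x)$ depends only on the second coordinate, so
\begin{equation*}
\int_{\widehat{\Sigma_\beta}} \widehat{A}\, d\widehat{\mu}_\infty = \int_{\Sigma_\beta} A\, d(\pi_2^\ast \widehat{\mu}_\infty) = \int_{\Sigma_\beta} A\, d\mu_\infty \,.
\end{equation*}
It remains to prove $\int A\, d\mu_\infty = m(A)$, which is the classical zero-temperature identification. Since each $\mu_{tA}$ is an equilibrium state for $tA$, dividing the variational inequality $h(\mu_{tA}) + t\int A\, d\mu_{tA} \geq h(\nu) + t\int A\, d\nu$ by $t$ and letting $t_n \to \infty$ yields $\int A\, d\mu_\infty \geq \int A\, d\nu$ for every $\nu \in \mathcal{P}_\sigma(\Sigma_\beta)$ (using boundedness of entropy on this compact subshift with specification, plus weak$^\ast$ continuity of $\nu \mapsto \int A\, d\nu$ for continuous $A$); also $\mu_\infty \in \mathcal{P}_\sigma(\Sigma_\beta)$ because weak$^\ast$ limits of $\sigma$-invariant probabilities remain $\sigma$-invariant. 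Hence $\mu_\infty \in \mathcal{P}_{\max}(A)$ and $\int A\, d\mu_\infty = m(A)$.

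The only delicate step is the transpose marginal computation, which requires knowing that the constant $c_A$ normalizes both eigenfunctions; this is not quite a triviality and would need a brief lemma-level verification that $\int_{\Sigma_\beta^\intercal} \psi_{A^\intercal}\, d\rho_{A^\intercal} = 1$ as well, consistent with the same $c_A$. Everything else reduces to Fubini, weak$^\ast$ compactness, and the standard variational ergodic-optimization estimate.
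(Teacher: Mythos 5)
Your proof is correct and follows essentially the same route as the paper: verify the marginals by integrating test functions of one coordinate against $\mu_{\widehat{A}}$ via \eqref{normalized-eigenfunction} and its transpose analogue, extract a common subsequence by weak$^\ast$ compactness, push forward along the projections, and substitute $\varphi = A$. The one place where you go beyond the paper is the final identification $\int A\,d\mu_\infty = m(A)$: the paper treats this as immediate after substituting $\varphi=A$ in the marginal identity, whereas you supply the standard zero-temperature argument (dividing the variational inequality by $t$, using boundedness of entropy on $\Sigma_\beta$ and closedness of $\mathcal{P}_\sigma(\Sigma_\beta)$), which is a welcome bit of completeness; and your flagged concern about the single constant $c_A$ normalizing both $\psi_A$ and $\psi_{A^\intercal}$ is real but resolves directly by Fubini, since $\int_{\widehat{\Sigma_\beta}} e^W\,d(\rho_{A^\intercal}\times\rho_A) = e^{c_A}$ gives both $\int_{\Sigma_\beta}\psi_A\,d\rho_A = 1$ and $\int_{\Sigma_\beta^\intercal}\psi_{A^\intercal}\,d\rho_{A^\intercal} = 1$ simultaneously.
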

\begin{proof}
Consider $\varphi \in \mathcal{C}(\Sigma_\beta)$ and $\psi_A$ such as appears in \eqref{normalized-eigenfunction}. Defining $\widehat{\varphi} \in \mathcal{C}(\widehat{\Sigma_\beta})$ as $\widehat{\varphi}(y, x) = \varphi(x)$ for each pair $(y, x) \in \widehat{\Sigma_\beta}$, we obtain that
\begin{align*}
\int_{\widehat{\Sigma_\beta}} \widehat{\varphi} d\mu_{\widehat{A}}
&= \int_{\widehat{\Sigma_\beta}} \widehat{\varphi} e^{W - c_A}d(\rho_{A^\intercal} \times \rho_A) \\
&= \int_{\Sigma_\beta} \varphi(x) \int_{\Sigma_\beta^\intercal}  {\bf 1}_{\widehat{\Sigma_\beta}}(y, x) e^{W(y, x) - c_A} d\rho_{A^\intercal}(y) d\rho_A(x) \\
&= \int_{\Sigma_\beta} \varphi \psi_A d\rho_A
= \int_{\Sigma_\beta} \varphi d\mu_A \;.
\end{align*}

In a similar way, it is not difficult to prove that for any $\varphi^\intercal \in \mathcal{C}(\Sigma_\beta^\intercal)$ is satisfied 
\[
\int_{\widehat{\Sigma_\beta}} \widehat{\varphi^\intercal} d\mu_{\widehat{A}} = \int_{\Sigma_\beta^\intercal} \varphi^\intercal d\mu_{A^\intercal} \;,
\]
where the function $\widehat{\varphi^\intercal} \in \mathcal{C}(\widehat{\Sigma_\beta^\intercal})$ is given by the equation $\widehat{\varphi^\intercal}(y, x) = \varphi^\intercal(y)$ for each $(y, x) \in \widehat{\Sigma_\beta}$.

By the above, it follows that for any pair of functions $\varphi^\intercal \in \mathcal{C}(\Sigma_\beta^\intercal)$ and $\varphi \in \mathcal{C}(\Sigma_\beta)$ is satisfied 
\[
\int_{\widehat{\Sigma_\beta}} (\varphi^\intercal(y) + \varphi(x)) d\mu_{\widehat{A}}(y, x) = \int_{\Sigma_\beta^\intercal} \varphi^\intercal d\mu_{A^\intercal} + \int_{\Sigma_\beta} \varphi d\mu_A \;,
\]
which implies that the probability measure $\mu_{\widehat{A}}$ also belongs to the transport plan $\Gamma(\mu_{A^\intercal}, \mu_{A})$ (see for details \cite{MR1964483}).

On the other hand, by compactness of $\widehat{\Sigma_\beta}$, we have existence of accumulation points of the family $(\mu_{t\widehat{A}})_{t>1}$ at $\infty$. That is, there is a strictly increasing sequence $(t_n)_{n \geq 1}$ taking values into the interval $(1, \infty)$, such that
\begin{equation}
\label{limit-weak*-bilateral}
\lim_{n \to \infty} \mu_{t_n\widehat{A}} = \widehat{\mu}_\infty, 
\end{equation}
where the limit above is taken in the weak* topology. Furthermore, taking a subsequence if necessary, by compactness of $\Sigma_\beta$ and $\Sigma_\beta^\intercal$, it follows that
\begin{equation}
\label{limit-weak*}
\lim_{n \to \infty} \mu_{t_n A} = \mu_\infty \;\text{ and }\; \lim_{n \to \infty} \mu_{t_n A^\intercal} = \mu_\infty^\intercal \;,
\end{equation}
in the weak* topology.

In particular, by \eqref{limit-weak*-bilateral} and \eqref{limit-weak*}, for each $\varphi \in \mathcal{C}(\Sigma_\beta)$ is satisfied
\begin{equation}
\label{bilateral-maximizing}
\int_{\widehat{\Sigma_\beta}} \widehat{\varphi} d\widehat{\mu}_\infty 
= \lim_{n \to \infty} \int_{\widehat{\Sigma_\beta}} \widehat{\varphi} d\mu_{t_n\widehat{A}} 
= \lim_{n \to \infty} \int_{\Sigma_\beta} \varphi d\mu_{t_n A}
= \int_{\Sigma_\beta} \varphi d\mu_\infty \;,
\end{equation}
and for any $\varphi^\intercal \in \mathcal{C}(\Sigma_\beta^\intercal)$, we have
\[
\int_{\widehat{\Sigma_\beta}} \widehat{\varphi^\intercal} d\widehat{\mu}_\infty 
= \lim_{n \to \infty} \int_{\widehat{\Sigma_\beta}} \widehat{\varphi^\intercal} d\mu_{t_n\widehat{A}} 
= \lim_{n \to \infty} \int_{\Sigma_\beta^\intercal} \varphi^\intercal d\mu_{t_n A^\intercal}
= \int_{\Sigma_\beta^\intercal} \varphi^\intercal d\mu_\infty^\intercal \;.
\]

By the above, we obtain that
\begin{equation*}
\int_{\widehat{\Sigma_\beta}} (\varphi^\intercal(y) + \varphi(x)) d\widehat{\mu}_\infty(y, x) = \int_{\Sigma_\beta^\intercal} \varphi^\intercal d\mu_\infty^\intercal + \int_{\Sigma_\beta} \varphi d\mu_\infty \;,
\end{equation*}
which implies that $\widehat{\mu}_\infty \in \Gamma(\mu_\infty^\intercal, \mu_\infty)$ (see \cite{MR1964483}). Furthermore, replacing by $\varphi = A$ in \eqref{bilateral-maximizing}, we obtain \eqref{bilateral-maximizing-value}, which concludes the proof of this lemma.
\end{proof}

It is widely known that \victor{one of the main interests} of the area of Ergodic Optimization is to identify the support of the maximizing measures associated to a fixed potential. \victor{In order to do that,} the so called calibrated sub-actions arise as a useful tool to identify such supports. Below we introduce a definition of \victor{those observables} associated to the potential $A$. 

\victor{First observe} that given a subshift $Y \subset \Sigma_{\lfloor \beta \rfloor}$, for any $\varphi \in \mathcal{H}_\theta(Y)$ and each $t > 1$ is satisfied $\mathrm{Hol}_{t \varphi} \leq t\mathrm{Hol}_{\varphi}$. By the above, we obtain that the function $\psi_{tA}$ belongs to $\Lambda_{t\mathrm{Hol}_A}(\Sigma_\beta)$ and the function $\psi_{tA^\intercal}$ belongs to $\Lambda_{t\mathrm{Hol_{A^\intercal}}}(\Sigma_\beta^\intercal)$ when $t > 1$. Therefore, the families $(\frac{1}{t}\log(\psi_{tA}))_{t > 1}$ and $(\frac{1}{t}\log(\psi_{tA^\intercal}))_{t > 1}$ are equicontinuous and uniformly bounded, which implies, as a consequence of the Arzela-Ascoli's Theorem, \victor{the existence} of a strictly increasing sequence $(t_n)_{n \geq 1}$ (which is actually taken as a subsequence of the one satisfying \eqref{limit-weak*-bilateral} and \eqref{limit-weak*}), such that the following uniform limits \victor{exist}:
\begin{equation}
\label{calibrated-sub-actions}
V := \lim_{n \to \infty}\frac{1}{t_n}\log(\psi_{t_n A}) \ \ \text{ and } \ \ V^\intercal := \lim_{n \to \infty}\frac{1}{t_n}\log(\psi_{t_n A^\intercal}) \;.
\end{equation}

Furthermore, it follows that $V \in \mathcal{H}_\theta(\Sigma_\beta)$ and $V^\intercal \in \mathcal{H}_\theta(\Sigma_\beta^\intercal)$ (see for instance \cite{MR2864625}). Besides that, since the cylinders on $\Sigma_\beta$ and $\Sigma_\beta^\intercal$ have empty boundary with respect to the product topology, it follows that the metric entropy map $\mu \mapsto h(\mu)$ is upper semi-continuous on both, the set $\mathcal{P}_\sigma(\Sigma_\beta)$ and the set $\mathcal{P}_\sigma(\Sigma_\beta^\intercal)$. Therefore, for any accumulation point $\mu_\infty$ of the family $(\mu_{tA})_{t > 1}$ when $t$ goes to $\infty$, \victor{we have}
\[
\lim_{t \to \infty} h(\mu_{tA}) = h(\mu_\infty) = \sup\{h(\mu) :\; \mu \in \mathcal{P}_{\max}(A)\} \;.
\]

The above, joint with the variational principle (see for instance \cite{MR3114331}), implies that
\begin{equation}
\label{mean-eigenvalues}
\lim_{t \to \infty} \frac{1}{t}\log(\lambda_{tA}) = m(A)\;.
\end{equation}

Following the same argument \victor{as above,} but applied to the family $(\mu_{t A^\intercal})_{t > 1}$, where $A^\intercal$ is the transpose potential, it follows that \victor{
\[
\lim_{t \to \infty} \frac{1}{t}\log(\lambda_{t A^\intercal}) = m(A^\intercal) \;.
\]}

In particular, since $\lambda_{t A} = \lambda_{t A^\intercal}$, by \eqref{bilateral-maximizing-value}, we are able to define the {\bf general maximizing value} given by 
\begin{equation}
\label{general-maximizing-value}
m := m(A) = m(A^\intercal) = \int_{\widehat{\Sigma_\beta}} \widehat{A} d\widehat{\mu}_\infty \;.
\end{equation} 

We claim that both of the maps appearing in \eqref{calibrated-sub-actions} result in calibrated sub-actions associated to the potentials $A$ and $A^\intercal$ respectively. Indeed, for each $n \in \mathbb{N}$ is satisfied
\begin{align*}
\frac{1}{t_n}\log(\lambda_{t_n A}) 
&= \frac{1}{t_n}\log(L_{t_n A}(\psi_{t_n A})(x)) - \frac{1}{t_n}\log(\psi_{t_n A})(x) \\
&= \frac{1}{t_n}\log\Bigl(\sum_{\sigma(z) = x}e^{t_n(A(z) + \frac{1}{t_n}\log(\psi_{t_n A})(z))}\Bigr) - \frac{1}{t_n}\log(\psi_{t_n A})(x) \;.
\end{align*}

Thus, taking the limit when $n \to \infty$ (see Lemma $4$ in \cite{MR3377291} for details), by \eqref{mean-eigenvalues}, we obtain that for each $x \in \Sigma_\beta$ is satisfied
\begin{align*}
m 
&= \sup\{A(z) + V(z) :\; \sigma(z) = x\} - V(x) \\
&= \sup\{A(z) + V(z) - V(x) :\; \sigma(z) = x \} \;.
\end{align*}

Thus, by \eqref{calibrated-sub-action}, the map $V \in \mathcal{H}_\theta(\Sigma_\beta)$ is a calibrated sub-action associated to the potential $A$. Moreover, following a similar procedure to the one that appears above, we are able to show that $V^\intercal$ is a calibrated sub-action associated to $A^\intercal$, \victor{as we claim above.} That is, for each $y \in \Sigma_\beta^\intercal$ is satisfied
\[
m = \sup\{ A^\intercal(z) + V^\intercal(z) - V^\intercal(y) :\; \sigma(z) = y\} \;.
\]

Hereafter, we will use the following notation \victor{
\begin{equation}
\label{sup}
\gamma := \sup_{(y, x) \in \widehat{\Sigma_\beta}}\{W(y, x) - V(x) - V^\intercal(y)\} \;,
\end{equation}}
where $V$ and $V^\intercal$ are the calibrated sub-actions obtained in \eqref{calibrated-sub-actions}. 

The following lemma gives some important tools to prove the main theorem of this paper. \victor{More specifically, it provides} a characterization of the value $\gamma$ defined in \eqref{sup} in terms of limits \victor{depending on} the strictly increasing sequence $(t_n)_{n \geq 1}$ satisfying \eqref{calibrated-sub-actions}. The statement of the result is the following.

\begin{lemma}
\label{mean-integral-lemma}
Consider $\beta > 1$ and \victor{assume that $\Sigma_\beta$ satisfies the specification property.} Set \victor{$A \in \mathcal{H}_\theta(\Sigma_\beta)$} and a strictly increasing sequence $(t_n)_{n \geq 1}$ satisfying \eqref{calibrated-sub-actions}. Then, we have 
\begin{equation}
\label{mean-integral}
\lim_{n \to \infty}\frac{1}{t_n}c_{t_n A} = \gamma \;.
\end{equation}

Furthermore, the $\sup$ in $\gamma$ is attained in the points belonging to $\mathrm{supp}(\widehat{\mu}_\infty)$.
\end{lemma}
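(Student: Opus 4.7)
My strategy is to transform $c_{tA}$ into a Laplace-type integral, identify its limit via uniform convergence of the sub-action approximants $\tfrac{1}{t}\log\psi_{tA}$ and $\tfrac{1}{t}\log\psi_{tA^\intercal}$, and then deduce the support claim from the exponential concentration of $\widehat{\mu}_{t_n A}$ that results. Starting from \eqref{normalized-constant} and using the Gibbs decompositions $d\mu_{tA}=\psi_{tA}\,d\rho_{tA}$ and $d\mu_{tA^\intercal}=\psi_{tA^\intercal}\,d\rho_{tA^\intercal}$ from Remarks \ref{Perron-Frobenius} and \ref{Perron-Frobenius-transpose}, I would rewrite
\[
\frac{c_{tA}}{t}=\frac{1}{t}\log\int_{\widehat{\Sigma_\beta}} e^{tF_t(y,x)}\,d(\mu_{tA^\intercal}\times\mu_{tA})(y,x),\qquad F_t:=W-\tfrac{1}{t}\log\psi_{tA^\intercal}-\tfrac{1}{t}\log\psi_{tA}.
\]
By \eqref{calibrated-sub-actions}, along the sequence $(t_n)_{n\geq 1}$ the function $F_{t_n}$ converges uniformly on the compact set $\widehat{\Sigma_\beta}$ to the continuous function $F:=W-V^\intercal-V$, whose supremum is exactly $\gamma$ by \eqref{sup}.

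The upper bound $\limsup_n c_{t_n A}/t_n\leq \gamma$ is then immediate from $\int e^{t_n F_{t_n}}\,d(\mu_{t_n A^\intercal}\times\mu_{t_n A})\leq e^{t_n\sup F_{t_n}}$ together with $\sup F_{t_n}\to\gamma$. For the matching lower bound, given $\epsilon>0$ I would pick a point $(y^*,x^*)\in\mathrm{supp}(\widehat{\mu}_\infty)$ with $F(y^*,x^*)\geq\gamma-\epsilon$ and a product cylinder neighborhood $U=[v]\times[w]$ of it on which $F_{t_n}>\gamma-2\epsilon$ for $n$ large. Since cylinders are clopen and hence continuity sets for every Borel measure, weak-$*$ convergence \eqref{limit-weak*} yields $(\mu_{t_n A^\intercal}\times\mu_{t_n A})(U)\to\mu_\infty^\intercal([v])\mu_\infty([w])$, and this limit is strictly positive because $\widehat{\mu}_\infty\in\Gamma(\mu_\infty^\intercal,\mu_\infty)$ (Proposition \ref{natural-extension-proposition}) forces $\mathrm{supp}(\widehat{\mu}_\infty)\subseteq\mathrm{supp}(\mu_\infty^\intercal)\times\mathrm{supp}(\mu_\infty)$. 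Hence
\[
\liminf_n \frac{c_{t_n A}}{t_n}\geq \gamma-2\epsilon+\liminf_n \tfrac{1}{t_n}\log(\mu_{t_n A^\intercal}\times\mu_{t_n A})(U)=\gamma-2\epsilon,
\]
and letting $\epsilon\downarrow 0$ yields \eqref{mean-integral}.

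For the attainment claim I would exploit the Radon--Nikodym identity $d\widehat{\mu}_{t_n A}/d(\mu_{t_n A^\intercal}\times\mu_{t_n A})=e^{t_n(F_{t_n}-c_{t_n A}/t_n)}$ together with \eqref{mean-integral}. Given any open $U\subset\widehat{\Sigma_\beta}$ with $\overline{U}\cap\{F=\gamma\}=\emptyset$, compactness provides $\sup_{\overline{U}}F<\gamma-\delta$ for some $\delta>0$; combining this with uniform convergence $F_{t_n}\to F$ and $c_{t_n A}/t_n\to\gamma$ gives $\widehat{\mu}_{t_n A}(U)\leq e^{-t_n\delta/2}$ for $n$ large. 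The portmanteau theorem applied to \eqref{limit-weak*-bilateral} then forces $\widehat{\mu}_\infty(U)=0$, so $\mathrm{supp}(\widehat{\mu}_\infty)\subseteq\{F=\gamma\}$ and every point of this (nonempty) support attains the supremum in $\gamma$.

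The main obstacle is the circular dependence between the lower bound and the support statement: the choice of $(y^*,x^*)\in\mathrm{supp}(\widehat{\mu}_\infty)$ with $F(y^*,x^*)\geq\gamma-\epsilon$ uses the convergence $c_{t_n A}/t_n\to\gamma$ being proved. I would break the loop by bootstrapping on any subsequential limit $\gamma_0:=\lim_k c_{t_{n_k} A}/t_{n_k}\leq\gamma$: running the concentration argument of the previous paragraph with $\gamma_0$ in place of $\gamma$ gives $\mathrm{supp}(\widehat{\mu}_\infty)\subseteq\{F\geq\gamma_0\}$, and then the probability constraint $\int e^{t_{n_k}(F_{t_{n_k}}-c_{t_{n_k}A}/t_{n_k})}\,d(\mu_{t_{n_k}A^\intercal}\times\mu_{t_{n_k}A})=1$ together with the lower bound applied at a point of that support forces $\gamma_0=\sup F=\gamma$.
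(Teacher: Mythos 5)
Your rewriting of $c_{t_nA}/t_n$ as a Laplace-type integral against $\mu_{t_nA^\intercal}\times\mu_{t_nA}$, and your upper bound $\limsup_n c_{t_nA}/t_n \leq \gamma$, coincide with what the paper does. Your Radon--Nikodym and portmanteau argument for the support attainment claim is also sound once \eqref{mean-integral} is in hand, and is essentially a repackaging of the cylinder-by-cylinder estimate the paper uses for that part.

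The genuine gap is in the lower bound for \eqref{mean-integral}. You correctly identified the circularity, but the bootstrap you sketch does not close it. From a subsequential limit $\gamma_0=\lim_k c_{t_{n_k}A}/t_{n_k}\leq\gamma$, the concentration argument gives $\mathrm{supp}(\widehat{\mu}_\infty)\subseteq\{F\geq\gamma_0\}$; applying your cylinder lower bound at a point $(y^*,x^*)$ of that support then yields $\liminf_n c_{t_nA}/t_n\geq F(y^*,x^*)\geq\gamma_0$, which is merely $\gamma_0\geq\gamma_0$ when $\gamma_0$ is the $\liminf$. Nothing in this chain, nor in the normalization $\int e^{t_n(F_{t_n}-c_{t_nA}/t_n)}\,d(\mu_{t_nA^\intercal}\times\mu_{t_nA})=1$, forces $\gamma_0$ up to $\gamma=\sup F$: the scenario where $\mu_{t_nA^\intercal}\times\mu_{t_nA}$ concentrates exponentially fast on a region where $F$ stays below $\gamma$ while $c_{t_nA}/t_n\to\gamma_0<\gamma$ is consistent with every estimate you have written. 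A Laplace/Varadhan-type asymptotic $\frac{1}{t_n}\log\int e^{t_nF_{t_n}}\,d\nu_n\to\sup F$ is false in general when the reference measures $\nu_n$ vary with $n$ and need not charge a neighbourhood of a maximizer of $F$; one needs quantitative lower bounds on the $\nu_n$-masses of cylinders (for instance the Gibbs inequalities for $\mu_{t_nA}$ at inverse temperature $t_n$), which your purely weak$^*$ argument never supplies. The paper's own proof simply invokes Lemma~4 of \cite{MR3377291} at exactly this step; to complete your proof you would need to either cite that result or reproduce the Gibbs-type cylinder lower bounds that make the Laplace asymptotics valid here.
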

\begin{proof}
Note that for each $n \in \mathbb{N}$ is satisfied \victor{
\begin{align*}
\frac{1}{t_n} c_{t_n A}
&= \frac{1}{t_n} \log\Bigl( \int_{\widehat{\Sigma_\beta}} e^{t_n W} d(\rho_{t_n A^\intercal} \times \rho_{t_n A}) \Bigr) \\
&= \frac{1}{t_n} \log\Bigl( \int_{\widehat{\Sigma_\beta}} \frac{e^{t_n W}}{\psi_{t_n A} \psi_{t_n A^\intercal}} d(\mu_{t_n A^\intercal} \times \mu_{t_n A}) \Bigr) \\
&= \frac{1}{t_n} \log\Bigl( \int_{\widehat{\Sigma_\beta}} e^{t_n (W - \frac{1}{t_n}\log(\psi_{t_n A}) - \frac{1}{t_n}\log(\psi_{t_n A^\intercal}))} d(\mu_{t_n A^\intercal} \times \mu_{t_n A}) \Bigr) \;.
\end{align*}}

Therefore, taking the limit when $n \to \infty$, by the uniform convergence of the functions in \eqref{calibrated-sub-actions}, we obtain \eqref{mean-integral} (see for details Lemma $4$ in \cite{MR3377291}).

The last claim of the lemma follows from the properties of the weak* topology. Indeed, for any $(\widetilde{y}, \widetilde{x}) \in \mathrm{supp}(\widehat{\mu}_\infty)$ and any pair of cylinders $[u]$ and $[v]$ \victor{containing the points $\widetilde{y}$ and $\widetilde{x}$ respectively,} that is, such that $u_i = \widetilde{y}_i$ for each $1 \leq i \leq |u|$ and $v_j = \widetilde{x}_j$ for each $1 \leq j \leq |v|$, we have \victor{
\begin{align}
e^{c_{t_n A}}\mu_{t_n \widehat{A}}([u] \times [v])
&= \int_{\widehat{\Sigma_\beta}} {\bf 1}_{[u] \times [v]} \frac{e^{t_n W}}{\psi_{t_n A^\intercal} \psi_{t_n A}} d(\mu_{t_n A^\intercal} \times \mu_{t_n A}) \nonumber \\
\leq& \mu_{t_n A^\intercal}([u])\mu_{t_n A}([v])\sup\Bigl\{ \frac{e^{t_n W(y, x)}}{\psi_{t_n A^\intercal}(y) \psi_{t_n A}(x)} :\; (y, x) \in [u] \times [v] \Bigr\} \nonumber \;.
\end{align} }

Therefore, since $(\widetilde{y}, \widetilde{x}) \in \mathrm{supp}(\widehat{\mu}_\infty)$, \victor{there exist a constant} $\delta > 0$ and $N \in \mathbb{N}$, such that, \victor{for any $n \geq N$ we have} $\mu_{t_n A^\intercal}([u]) > \delta$, $\mu_{t_n A}([v]) > \delta$ and $\mu_{t_n \widehat{A}}([u] \times [v]) > \delta$. Then, by \eqref{calibrated-sub-actions}, it follows that 
\begin{equation*}
\lim_{n \to \infty}\frac{1}{t_n} c_{t_n A} \leq \sup\Bigl\{ W(y, x) - V^\intercal(y) - V(x) :\; y \in [u],\; x \in [v] \Bigr\} \;.
\end{equation*}

Furthermore, since the cylinders $[u]$ and $[v]$ are arbitrary, we obtain that 
\[
\gamma = \lim_{n \to \infty}\frac{1}{t_n} c_{t_n A} \leq W(\widetilde{y}, \widetilde{x}) - V^\intercal(\widetilde{y}) - V(\widetilde{x}) \;,
\]
which concludes the proof of this lemma.
\end{proof}

\section{Large deviations principle}
\label{large-deviations-section}

Our main goal in this section is to prove a first level large deviations principle for the matter of $\beta$-shifts. More precisely, we assume suitable conditions on the greedy $\beta$-expansion of $1$ in order to guarantee that the theory of Perron-Frobenius holds. Besides that, we use properties of the so called involution kernel, which is constructed in a similar way to the ones appearing in \cite{MR3114331} and \cite{MR2210682} (see section \ref{involution-kernel-section} for details), in order to prove a sort of large deviations principle for a function defined on the bilateral $\beta$-shift and extend that result to the \victor{natural extension to the bilateral $\beta$-shift of the rate function defined in \eqref{upper-sc-function}.} 

The so called {\bf rate function} associated to the potential \victor{$A \in \mathcal{H}_\theta(\Sigma_\beta)$} is given by the equation
\begin{equation}
\label{upper-sc-function}
I(x) := \lim_{k \to \infty} -\sum_{j = 0}^{k - 1} (V \circ \sigma - V - A + m)(\sigma^j(x)) \;,
\end{equation}
where the function \victor{$V \in \mathcal{H}_\theta(\Sigma_\beta)$} is a calibrated sub-action satisfying the same conditions \victor{as the ones appearing} in \eqref{calibrated-sub-actions} and the value $m$ is the general maximizing value that appears defined in \eqref{general-maximizing-value}. Actually, it is not difficult to check that the rate function $I$ takes values into the interval $[-\infty, 0]$, belongs to the set \victor{$\mathcal{H}_\theta(\Sigma_\beta)$} and attains its maximum value on the union of the supports of all the maximizing measures associated to the potential $A$. \victor{Indeed, for any $\mu_{\max} \in \mathcal{P}_{\max}(A)$ we have $\int_{\Sigma_\beta} I d\mu_{\max} = 0$ which implies that for each $x \in \mathrm{supp}(\mu_{\max})$ we have $I(x) = 0$.}

In order to prove the so called large deviations principle, we define a family of maps \victor{$F_{t_n, k} :\; \widehat{\Sigma_\beta} \to \mathbb{R}$,} with $(t_n)_{n \geq 1}$ a strictly increasing sequence satisfying \eqref{calibrated-sub-actions} and $k \in \mathbb{N}$. The maps belonging to the family are given by the equation
\begin{align*}
F_{t_n, k}(y, x) 
:=& -\frac{1}{t_n} c_{t_n A} + W(y, x) - \frac{1}{t_n}\log{\psi_{t_n A}}(x) - \frac{1}{t_n}\log{\psi_{t_n A^\intercal}}(y) \\
&- \sum_{j = 0}^{k - 1}\Bigl(\frac{1}{t_n}\log{\psi_{t_n A^\intercal}} \circ \sigma - \frac{1}{t_n}\log{\psi_{t_n A^\intercal}} - A^\intercal + \frac{1}{t_n}\log{\lambda_{t_n A^\intercal}}\Bigr)(\tau_{x, j + 1}(y)) \;.
\end{align*}

By Remark \ref{sequences}, each one of the maps $F_{t_n, k}$ is well defined for any pair \victor{$(y, x) \in \widehat{\Sigma_\beta}$.} Moreover, from Lemma \ref{mean-integral-lemma} and the equations \eqref{calibrated-sub-actions} and \eqref{mean-eigenvalues}, it follows that the sequence of functions $(F_{t_n, k})_{n \geq 1}$ converges uniformly when $n$ goes to $\infty$ to a map \victor{$F_k :\; \widehat{\Sigma_\beta} \to \mathbb{R}$,} given by the equation
\begin{equation}
\label{F_k-function}
F_k(y, x) = -\gamma + W(y, x) - V(x) - V^\intercal(y) - \sum_{j = 0}^{k - 1}(V^\intercal \circ \sigma - V^\intercal - A^\intercal + m)(\tau_{x, j + 1}(y)) \;.
\end{equation}

In the following lemma we prove that each one of the functions $F_k$, with $k \in \mathbb{N}$, satisfies a sort of large deviations principle, which is the main tool to prove the main theorem of this paper. The statement of the result is the following.

\begin{lemma}
\label{large-deviations-lemma-F_k}
Consider $\beta > 1$ and \victor{assume that $\Sigma_\beta$ satisfies the specification property.} Fix a strictly increasing sequence $(t_n)_{n \geq 1}$ satisfying \eqref{calibrated-sub-actions}, assume that \victor{$A \in \mathcal{H}_\theta(\Sigma_\beta)$ and let $(F_k)_{k \geq 1}$ be a sequence of functions, with $F_k$ given by the equation \eqref{F_k-function}. Then, for each $k \in \mathbb{N}$ and any $w \in \mathcal{L}(\Sigma_\beta)$ is satisfied
\begin{equation*}
\lim_{n \to \infty}\frac{1}{t_n}\log(\mu_{t_n A}([w])) = \sup\{F_k(y, x) :\; (y, x) \in \Sigma_\beta^\intercal \times [w]\} \;.
\end{equation*}} 
\end{lemma}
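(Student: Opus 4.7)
The approach is to express $\mu_{t_nA}([w])$ as a Laplace-type integral on the bilateral shift and then carry out matching upper and lower asymptotic estimates. From Proposition \ref{natural-extension-proposition}, $\mu_{t_n\widehat{A}}=e^{t_nW-c_{t_nA}}\,d(\rho_{t_nA^\intercal}\times\rho_{t_nA})$ has marginal $\mu_{t_nA}$, so $\mu_{t_nA}([w])=\mu_{t_n\widehat{A}}(\Sigma_\beta^\intercal\times[w])$; substituting $d\rho=\psi^{-1}d\mu$ on each factor yields the base representation
\[
\mu_{t_nA}([w]) = \int_{\Sigma_\beta^\intercal\times[w]} e^{t_nF_{t_n,0}(y,x)}\,d(\mu_{t_nA^\intercal}\times\mu_{t_nA}).
\]
By Lemma \ref{mean-integral-lemma}, \eqref{calibrated-sub-actions} and \eqref{mean-eigenvalues}, the convergences $\tfrac{1}{t_n}c_{t_nA}\to\gamma$, $\tfrac{1}{t_n}\log\psi_{t_nA}\to V$, $\tfrac{1}{t_n}\log\psi_{t_nA^\intercal}\to V^\intercal$ and $\tfrac{1}{t_n}\log\lambda_{t_nA^\intercal}\to m$ hold uniformly, so $F_{t_n,k}\to F_k$ uniformly on $\widehat{\Sigma_\beta}$ for every $k$.

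The key identity, obtained by iterating the involution-kernel cocycle $W(y,x)=W(\tau_{x,1}(y),\sigma x)+A^\intercal(\tau_{x,1}(y))-A(x)$ a total of $k$ times along the $\widehat{\sigma}$-orbit of $(y,x)$ (which stays inside $\widehat{\Sigma_\beta}$ by Remark \ref{sequences}) and combining it with the eigenvalue equation $L_{t_nA^\intercal}^k\psi_{t_nA^\intercal}=\lambda_{t_nA^\intercal}^k\psi_{t_nA^\intercal}$ singled out on the branch $z=\tau_{x,k}(y)$, is the pointwise relation
\[
F_{t_n,0}(y,x) = F_{t_n,k}(y,x)-\tfrac{1}{t_n}\log p_{t_n,k}(y,x),
\]
where $p_{t_n,k}(y,x)\in(0,1]$ is the probability of traversing the specific backward path $y\mapsto\tau_{x,1}(y)\mapsto\cdots\mapsto\tau_{x,k}(y)$ under the normalized Ruelle operator on $\Sigma_\beta^\intercal$. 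For the upper bound, summing this over all admissible $k$-prefixes $a$ in place of $(x_1,\dots,x_k)$ and using $\sum_a p_{t_n,k}(y,a)=1$ together with the Gibbs property of $\mu_{t_nA^\intercal}$ (valid under specification by Lemma \ref{specification-lemma}) gives $\limsup\tfrac{1}{t_n}\log\mu_{t_nA}([w])\le\sup_{\Sigma_\beta^\intercal\times[w]}F_k$. For the lower bound, fix $\varepsilon>0$ and a near-maximizer $(y^*,x^*)$ of $F_k$, choose the length-$k$ cylinder $[u^*]\ni y^*$ with $u^*=(x^*_k,\dots,x^*_1)$, and a cylinder $[v^*]\ni x^*$ contained in $[w]$. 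H\"older continuity and uniform convergence give $F_{t_n,0}\ge F_0(y^*,x^*)-2\varepsilon$ on $[u^*]\times[v^*]$, while the Gibbs estimate produces $\mu_{t_nA^\intercal}([u^*])\mu_{t_nA}([v^*])\ge e^{t_n[F_k(y^*,x^*)-F_0(y^*,x^*)]+o(t_n)}$, because the Gibbs exponent $S_k^\intercal A^\intercal(y^*)-k\,\tfrac{\log\lambda_{t_nA^\intercal}}{t_n}$ matches $F_k-F_0$ at $(y^*,x^*)$ in the limit. Combining these yields $\mu_{t_nA}([w])\ge e^{t_n(F_k(y^*,x^*)-2\varepsilon)+o(t_n)}$, and $\varepsilon\downarrow 0$ closes the argument.

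\textbf{Main obstacle.} The critical subtlety is the alignment, in the limit $n\to\infty$, between the forward Gibbs exponent $S_k^\intercal A^\intercal(y^*)-km$ for the specific cylinder $[u^*]$ with $u^*=(x^*_k,\dots,x^*_1)$, and the backward Birkhoff sum $\sum_{j=0}^{k-1}(V^\intercal\circ\sigma-V^\intercal-A^\intercal+m)(\tau_{x^*,j+1}(\sigma^ky^*))$ embedded in $F_k$; this rests on the iterated involution-kernel cocycle and on uniform bounded distortion of the Gibbs states at finite temperature, both underwritten by the specification property through Lemma \ref{specification-lemma}.
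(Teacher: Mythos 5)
Your proposal follows the same skeleton as the paper's proof: both begin from the identity $\mu_{t_nA}([w]) = \int_{\Sigma_\beta^\intercal\times[w]} e^{t_n W - c_{t_nA} - \log\psi_{t_nA} - \log\psi_{t_nA^\intercal}}\, d(\mu_{t_nA^\intercal}\times\mu_{t_nA})$, i.e.\ an integral whose integrand is $e^{t_nF_{t_n,0}}$, and then extract matching Laplace-type estimates against $\sup F_k$. The paper passes from this integrand directly to $e^{t_n\sup F_{t_n,k}+\epsilon}$ on the upper side and to an infimum over cylinders around a near-maximizer on the lower side; you try to make the passage from $F_{t_n,0}$ to $F_{t_n,k}$ explicit via the cocycle. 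Your identity $F_{t_n,0}=F_{t_n,k}-\tfrac1{t_n}\log p_{t_n,k}$ with $p_{t_n,k}\in(0,1]$ the normalized backward path weight is correct and does isolate the crux.

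The deployment of that identity, however, has concrete gaps. For the upper bound: since $p_{t_n,k}\le 1$, the identity gives $e^{t_nF_{t_n,0}}=e^{t_nF_{t_n,k}}/p_{t_n,k}\ge e^{t_nF_{t_n,k}}$, so the pointwise comparison runs in the wrong direction, and the claim that "summing over admissible $k$-prefixes $a$ with $\sum_a p_{t_n,k}(y,a)=1$" rescues it is not coherent as stated — the integral is constrained to $x\in[w]$ while the normalization of $p_{t_n,k}$ sums over all prepended $k$-blocks, and you never reconcile the two. For the lower bound: requiring $[u^*]\ni y^*$ with $u^*=(x^*_k,\dots,x^*_1)$ imposes a constraint tying the coordinates of $y^*$ to those of $x^*$ which does not follow from $(y^*,x^*)$ being a near-maximizer of $F_k$; in the paper the cylinder chosen is $[y']_{|w|+n}$, generated by the actual first coordinates of $y'$, not of $x'$. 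Finally, the Gibbs bounds you invoke for $\mu_{t_nA^\intercal}$ at inverse temperature $t_n$ carry constants that depend on $\mathrm{Hol}_{t_nA^\intercal}=t_n\mathrm{Hol}_{A^\intercal}$, so "the Gibbs property" alone does not yield the $o(t_n)$ control you assert. You explicitly flag the unresolved point (your "Main obstacle"), which is honest, but it confirms the argument is not closed; the paper itself handles this transition rather tersely, by combining the $\sup$/$\inf$ over shrinking cylinders $[y']_{|w|+n}\times[x']_{|w|+n}$ with uniform convergence $F_{t_n,k}\to F_k$, without routing through transition probabilities at all.
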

\begin{proof}
\victor{Consider $k \in \mathbb{N}$, \victor{$w \in \mathcal{L}(\Sigma_\beta)$} and $\epsilon > 0$, since the limits 
\[
\mu_\infty = \lim_{n \to \infty} \mu_{t_n A} \text{ and } \mu_\infty^\intercal = \lim_{n \to \infty} \mu_{t_n A^\intercal} \;,
\] 
are satisfied in the weak* topology and the sequence of functions $(F_{t_n, k})_{n \geq 1}$ converges uniformly to the map $F_k$.} By Proposition \ref{natural-extension-proposition}, there is $N_1 \in \mathbb{N}$ such that for each $n \geq N_1$ is satisfied \victor{
\begin{align}
\mu_{t_n A}([w]) 
&= \mu_{t_n \widehat{A}}(\Sigma_\beta^\intercal \times [w]) \nonumber \\
&= \int_{\widehat{\Sigma_\beta}} {\bf 1}_{\Sigma_\beta^\intercal \times [w]} e^{t_n W - c_{t_n A}} d(\rho_{t_n A^\intercal} \times \rho_{t_n A}) \nonumber \\ 
&= \int_{\widehat{\Sigma_\beta}} {\bf 1}_{\Sigma_\beta^\intercal \times [w]} e^{-c_{t_n A} + t_n W - \log(\psi_{t_n A}) - \log(\psi_{t_n A^\intercal})} d(\mu_{t_n A^\intercal} \times \mu_{t_n A}) \nonumber \\
&\leq e^{t_n \sup\{F_{t_n, k}(y, x) :\; (y, x) \in \Sigma_\beta^\intercal \times [w]\} + \epsilon} \label{inequality-F_k} \;.
\end{align}}

Hence, taking $\log$ in both of the sides of the inequality and multiplying by $\frac{1}{t_n}$, we obtain that \victor{
\begin{equation*}
\frac{1}{t_n}\log(\mu_{t_n A}([w])) 
\leq \sup\{F_{t_n, k}(y, x) :\; (y, x) \in \Sigma_\beta^\intercal \times [w]\} + \frac{\epsilon}{t_n} \;.
\end{equation*}}

Therefore, taking the $\limsup$ when $n$ goes to $\infty$, it follows that \victor{
\begin{equation*}
\limsup_{n \to \infty} \frac{1}{t_n}\log(\mu_{t_n A}([w])) 
\leq \sup\{F_k(y, x) :\; (y, x) \in \Sigma_\beta^\intercal \times [w]\} \;.
\end{equation*}}

On the other hand, by the properties of the $\sup$, there exists \victor{$(y', x') \in \Sigma_\beta^\intercal \times [w]$, such that
\begin{align*}
\sup\{F_k(y, x) :\; (y, x) \in \Sigma_\beta^\intercal \times [w]\} 
&\geq F_k(y', x') \\
&> \sup\{F_k(y, x) :\; (y, x) \in \Sigma_\beta^\intercal \times [w]\} - \epsilon \;.
\end{align*}}

Then, by continuity of the map $F_k$, there exists $N_2 \geq N_1$ such that for any $n \geq N_2$ and each pair $(\widetilde{y}, \widetilde{x}) \in [y']_{|w|+n} \times [x']_{|w|+n}$ is satisfied \victor{
\begin{align}
\sup\{F_k(y, x) :\; (y, x) \in \Sigma_\beta^\intercal \times [w]\} 
&\geq F_k(\widetilde{y}, \widetilde{x}) \nonumber \\
&> \sup\{F_k(y, x) :\; (y, x) \in \Sigma_\beta^\intercal \times [w]\} - 2\epsilon \label{inequality-F_k-sup} \;.
\end{align}}

\victor{Moreover,} following a similar procedure to the one appearing in \eqref{inequality-F_k}, but taking $\inf$ instead of $\sup$, by monotonicity of the measure $\mu_{t_n A^\intercal} \times \mu_{t_n A}$ we can guarantee \victor{the existence} of $N_3 \geq N_2$ such that for any $n \geq N_3$ is satisfied
\[
\mu_{t_n A}([w]) \geq e^{t_n \inf\{F_k(\widetilde{y}, \widetilde{x}) :\; (\widetilde{y}, \widetilde{x}) \in [y']_{|w|+N_3} \times [x']_{|w|+N_3}\} - \epsilon} \mu_{t_n A}([x']_{|w|+N_3}) \;.
\] 

Therefore, taking $\log$ in both of the sides of the inequality and multiplying by $\frac{1}{t_n}$, we obtain that
\[
\frac{1}{t_n}\log(\mu_{t_n A}([w])) \geq \inf\{F_k(\widetilde{y}, \widetilde{x}) :\; (\widetilde{y}, \widetilde{x}) \in [y']_{|w|+N_3} \times [x']_{|w|+N_3}\} - \frac{\epsilon}{t_n} \;.
\]

Finally, taking the $\liminf$ when $n$ goes to $\infty$, by \eqref{inequality-F_k-sup}, it follows that \victor{
\begin{align*}
\liminf_{n \to \infty} \frac{1}{t_n}\log(\mu_{t_n A}([w])) 
&\geq \inf\{F_k(\widetilde{y}, \widetilde{x}) :\; (\widetilde{y}, \widetilde{x}) \in [y']_{|w|+N_3} \times [x']_{|w|+N_3}\} \\
&\geq \sup\{F_k(y, x) :\; (y, x) \in \Sigma_\beta^\intercal \times [w]\} - 2\epsilon \label{inequality-F_k-sup}\;.
\end{align*}}

Hence, taking the limit when $\epsilon$ goes to $0$, we conclude the proof of this lemma.
\end{proof}

\victor{Before proving Theorem \ref{large-deviations-theorem},} which is the main result in this paper, it is necessary to prove some technical lemmas that establish the relation between the natural extension of the so called rate function, \victor{denoted by $\widehat{I}$}, and the uniform limit of the sequence of functions $(F_k)_{k \geq 1}$. The first one of \victor{those lemmas} is the following.

\begin{lemma}
\label{lemma-upper-sc-function}
Consider $\beta > 1$ and \victor{assume that $\Sigma_\beta$ satisfies the specification property.} \victor{Let $W \in \mathcal{H}_\theta(\widehat{\Sigma_\beta})$ be an involution kernel associated to $A \in \mathcal{H}_\theta(\Sigma_\beta)$ and assume that $I(x) > -\infty$ for each $x \in \Sigma_\beta$. Then, for each $(y, x) \in \widehat{\Sigma_\beta}$,} the natural extension $\widehat{I}$ of the map $I$ defined in \eqref{upper-sc-function} satisfies the following expression 
\begin{align*}
\widehat{I}(y, x) 
=& \lim_{k \to \infty} \Bigl(-(W - \widehat{V} - \widehat{V^\intercal})(\widehat{\sigma}^k(y, x)) + W(y, x) - V(x) - V^\intercal(y)\\
&- \sum_{j = 0}^{k - 1}(V^\intercal \circ \sigma - V^\intercal - A^\intercal + m)(\tau_{x, j + 1}(y)) \Bigr) \;.
\end{align*} 
\end{lemma}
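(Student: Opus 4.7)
Since the rate function depends only on the forward coordinate, $\widehat{I}(y,x) = I(x)$, and the task reduces to rewriting $I(x)$ via the involution kernel $W$ together with the transpose data $A^\intercal$ and $V^\intercal$. The plan is to combine three ingredients: a $V$-telescope along the forward orbit of $x$, the defining identity of $W$ applied along $\widehat{\sigma}^j(y,x)$, and a $V^\intercal$-telescope along the sequence of preimages $u_j := \tau_{x,j+1}(y)$.

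First, the $V$-telescope rewrites the defining limit as
\[
I(x) = \lim_{k \to \infty}\bigl[V(x) - V(\sigma^k x) + S_k A(x) - k m\bigr],
\]
where $S_k A(x) := \sum_{j=0}^{k-1} A(\sigma^j x)$. The assumption $I(x) > -\infty$, together with the non-negativity of $V\circ\sigma - V - A + m$ coming from the calibrated sub-action inequality \eqref{calibrated-sub-action}, guarantees summability of the underlying series.

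Second, applying the defining identity $\widehat{A^\intercal}(ay, x) = \widehat{A}(y, ax) + W(y, ax) - W(ay, x)$ with $a = x_{j+1}$ and $(y, ax) = \widehat{\sigma}^j(y,x) \in \widehat{\Sigma_\beta}$ (by Remark \ref{sequences}) specializes to $A^\intercal(u_j) = A(\sigma^j x) + W(\widehat{\sigma}^j(y,x)) - W(\widehat{\sigma}^{j+1}(y,x))$. Summing from $j = 0$ to $k - 1$ and telescoping the $W$-differences produces
\[
S_k A(x) = \sum_{j=0}^{k-1} A^\intercal(u_j) + W(\widehat{\sigma}^k(y,x)) - W(y,x).
\]
On the transpose side, the relations $\sigma(u_0) = y$ and $\sigma(u_j) = u_{j-1}$ for $j \geq 1$ collapse the $V^\intercal$-telescope to
\[
\sum_{j=0}^{k-1}(V^\intercal \circ \sigma - V^\intercal)(u_j) = V^\intercal(y) - V^\intercal(\tau_{x,k}(y)),
\]
which rearranges $\sum_{j=0}^{k-1} A^\intercal(u_j) - k m$ in terms of the boundary $V^\intercal$-values and the transpose sum $\sum_{j=0}^{k-1}(V^\intercal \circ \sigma - V^\intercal - A^\intercal + m)(u_j)$.

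Finally, substituting these two identities into the expression for $I(x)$ and regrouping the basepoint contributions $W(y,x) - V(x) - V^\intercal(y)$ and the $k$-th iterate contributions $W(\widehat{\sigma}^k(y,x)) - V(\sigma^k x) - V^\intercal(\tau_{x,k}(y))$ into the combination $(W - \widehat{V} - \widehat{V^\intercal})$ evaluated at the two endpoints reproduces the claimed formula for $\widehat{I}(y,x)$. The main obstacle is the existence of the limit on the right-hand side: summability of the forward series $\sum (V \circ \sigma - V - A + m)(\sigma^j x)$, which is equivalent to $I(x) > -\infty$, transfers through the involution-kernel identity above to summability of $\sum (V^\intercal \circ \sigma - V^\intercal - A^\intercal + m)(u_j)$ modulo the boundary contributions from $W$, $V$ and $V^\intercal$, which remain uniformly bounded by compactness of $\widehat{\Sigma_\beta}$ and continuity of these three observables.
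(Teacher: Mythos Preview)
Your proposal is correct and follows essentially the same route as the paper's proof. Both arguments perform the same three manipulations: telescope the $V$-coboundary along the forward orbit of $x$, replace the Birkhoff sum $S_kA(x)$ by $\sum_j A^\intercal(\tau_{x,j+1}(y))$ plus the $W$-boundary terms via the involution-kernel cocycle identity, and then telescope the $V^\intercal$-coboundary along the backward sequence $\tau_{x,j+1}(y)$. The only difference is presentational: the paper writes this as a single chain of equalities starting from $\widehat I(y,x)=\lim_k -\sum_{j=0}^{k-1}(\widehat V\circ\widehat\sigma-\widehat V-\widehat A+m)(\widehat\sigma^j(y,x))$, while you isolate the three telescoping identities first and combine them at the end. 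Your justification for existence of the limit (boundedness of $W,V,V^\intercal$ on the compact $\widehat{\Sigma_\beta}$ together with convergence of the forward series) is equivalent to the paper's observation that the transpose sum is monotone and bounded.
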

\begin{proof}
Since $V$ is a calibrated sub-action associated to the potential $A$, it follows that for each pair \victor{$(y, x) \in \widehat{\Sigma_\beta}$ the sequence 
\[
k \mapsto -\sum_{j = 0}^{k - 1}(V^\intercal \circ \sigma - V^\intercal - A^\intercal + m)(\tau_{x, j + 1}(y)) \;,
\]}
is decreasing and \victor{bounded from below,} which implies convergence.

Besides that, for any pair \victor{$(y, x) \in \widehat{\Sigma_\beta}$} is satisfied
\begin{align*}
\widehat{I}(y, x)
=& \lim_{k \to \infty} -\sum_{j = 0}^{k - 1} (\widehat{V} \circ \widehat{\sigma} - \widehat{V} - \widehat{A} + m)(\widehat{\sigma}^j(y, x)) \\
=& \lim_{k \to \infty} \Bigl(-(W - \widehat{V} - \widehat{V^\intercal})(\widehat{\sigma}^k(y, x)) + W(y, x) - \widehat{V}(y, x) - \widehat{V^\intercal}(\widehat{\sigma}^k(y, x)) \\
& \ \ - \sum_{j = 0}^{k - 1} (-\widehat{A^\intercal} + m)(\widehat{\sigma}^{j+1}(y, x)) \Bigr) \\
=& \lim_{k \to \infty} \Bigl(-(W - \widehat{V} - \widehat{V^\intercal})(\widehat{\sigma}^k(y, x)) + W(y, x) - \widehat{V}(y, x) - \widehat{V^\intercal}(y, x) \\
& \ \ - \sum_{j = 0}^{k - 1} (\widehat{V^\intercal} \circ \widehat{\sigma}^{-1} - \widehat{V^\intercal} - \widehat{A^\intercal} + m)(\widehat{\sigma}^{j+1}(y, x)) \Bigr) \\
=& \lim_{k \to \infty} \Bigl(-(W - \widehat{V} - \widehat{V^\intercal})(\widehat{\sigma}^k(y, x)) + W(y, x) - V(x) - V^\intercal(y)\\
& \ \ - \sum_{j = 0}^{k - 1}(V^\intercal \circ \sigma - V^\intercal - A^\intercal + m)(\tau_{x, j + 1}(y)) \Bigr) \;.
\end{align*} 

Such as we wanted to prove.
\end{proof}

In the following lemma we will check the behavior of the sequences $(\widehat{\sigma}^k(y, x))_{k \geq 1}$ when \victor{$(y, x) \in \widehat{\Sigma_\beta}$.} More specifically, we verify that it is always possible to find a maximizing measure supported on the accumulation points of \victor{those sequences.} The statement of the result is the following.
\begin{lemma}
\label{maximizing-measure-lemma}
Consider \victor{$(y, x) \in \widehat{\Sigma_\beta}$ and $\widehat{\nu}_\infty$ a weak* accumulation point of} the sequence of \victor{periodic probability measures $(\widehat{\nu}_k)_{k \geq 1}$ on $\widehat{\Sigma_\beta}$} given by
\begin{equation*}
\widehat{\nu}_k := \frac{1}{k}\sum_{j = 0}^{k - 1}\delta_{\widehat{\sigma}^j(y, x)} \;.
\end{equation*}

\victor{Then, $\widehat{\nu}_\infty$ is a $\widehat{\sigma}$-invariant measure on $\widehat{\Sigma_\beta}$. Furthermore, if $\widehat{I}(y, x) > -\infty$, we have 
\[
\int_{\widehat{\Sigma_\beta}} \widehat{A} d\widehat{\nu}_\infty = m \;,
\]}
where $m$ is \victor{the general maximizing value} given by \eqref{general-maximizing-value}. 
\end{lemma}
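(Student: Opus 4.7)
The plan is to handle the two claims in turn. For $\widehat{\sigma}$-invariance I would apply the standard Krylov--Bogolyubov argument: for any $\varphi \in \mathcal{C}(\widehat{\Sigma_\beta})$ the quantity $\int (\varphi\circ\widehat{\sigma} - \varphi) d\widehat{\nu}_k$ telescopes to $\frac{1}{k}(\varphi(\widehat{\sigma}^k(y,x)) - \varphi(y,x))$, whose absolute value is at most $\frac{2\|\varphi\|_\infty}{k}$ and hence vanishes as $k\to\infty$. Passing to the weak* limit along the subsequence defining $\widehat{\nu}_\infty$ gives $\int \varphi\circ\widehat{\sigma} d\widehat{\nu}_\infty = \int \varphi d\widehat{\nu}_\infty$ for every such $\varphi$, so $\widehat{\nu}_\infty$ is $\widehat{\sigma}$-invariant.

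For the integral identity, introduce the continuous coboundary-type function
\[
G := \widehat{V}\circ\widehat{\sigma} - \widehat{V} - \widehat{A} + m
\]
on $\widehat{\Sigma_\beta}$, where $\widehat{V}(y,x) := V(x)$. The calibrated sub-action identity for $V$ established immediately before Lemma \ref{mean-integral-lemma} reads $m = \sup\{A(z) + V(z) - V(\sigma(z)) :\; z \in \Sigma_\beta\}$, which forces $G \geq 0$ pointwise on $\widehat{\Sigma_\beta}$. Evaluating $G$ along the orbit of $(y,x)$ one finds $G(\widehat{\sigma}^j(y,x)) = (V\circ\sigma - V - A + m)(\sigma^j(x))$, so by the very definition of the rate function in \eqref{upper-sc-function} the natural extension satisfies $\widehat{I}(y,x) = I(x) = -\sum_{j=0}^{\infty} G(\widehat{\sigma}^j(y,x))$.

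Now the hypothesis $\widehat{I}(y,x) > -\infty$ forces this nonnegative series to be summable, so the partial sums $S_k := \sum_{j=0}^{k-1} G(\widehat{\sigma}^j(y,x))$ remain bounded and $\int G d\widehat{\nu}_k = \tfrac{1}{k}S_k \to 0$. Since $G$ is H\"older continuous on the compact space $\widehat{\Sigma_\beta}$, weak* convergence along the chosen subsequence gives $\int G d\widehat{\nu}_\infty = 0$. Using $\widehat{\sigma}$-invariance of $\widehat{\nu}_\infty$ to annihilate the coboundary term $\widehat{V}\circ\widehat{\sigma} - \widehat{V}$, I obtain $0 = -\int \widehat{A} d\widehat{\nu}_\infty + m$, i.e.\ $\int_{\widehat{\Sigma_\beta}} \widehat{A} d\widehat{\nu}_\infty = m$.

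The only mildly delicate point is reconciling this computation with the alternative representation of $\widehat{I}$ given by Lemma \ref{lemma-upper-sc-function}, which introduces the term $(W - \widehat{V} - \widehat{V^\intercal})(\widehat{\sigma}^k(y,x))$; however, this term is uniformly bounded on the compact set $\widehat{\Sigma_\beta}$, so the summability of $G$ along the forward orbit is equivalent to the finiteness of $\widehat{I}(y,x)$ under either description. With that remark, the entire argument reduces to the standard Birkhoff averaging scheme sketched above, and there is no serious obstacle.
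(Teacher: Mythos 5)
Your proof is correct and follows essentially the same route as the paper: both rest on the nonnegativity of $G = \widehat{V}\circ\widehat{\sigma}-\widehat{V}-\widehat{A}+m$ (the calibrated sub-action inequality) together with the summability forced by $\widehat{I}(y,x) > -\infty$, to conclude that the Ces\`aro averages of $\widehat{A}$ along the forward orbit of $(y,x)$ converge to $m$. Your packaging via the coboundary $\widehat{V}\circ\widehat{\sigma}-\widehat{V}$ and the already-established $\widehat{\sigma}$-invariance of $\widehat{\nu}_\infty$ is a slightly cleaner way to finish than the paper's direct $\liminf$/$\limsup$ manipulation of the Birkhoff sums of $A$, but the underlying argument is the same.
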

\begin{proof}
\victor{Note that the $\widehat{\sigma}$-invariance of $\widehat{\nu}_\infty$ follows from a straightforward argument. On the other hand, if} $\widehat{I}(y, x) = I(x) > -\infty$ for any pair \victor{$(y, x) \in \widehat{\Sigma_\beta}$,} given $\epsilon > 0$, there is $k_\epsilon \in \mathbb{N}$ such that for any \victor{pair of natural numbers} $k_1 > k_0 \geq k_\epsilon$ is satisfied
\[
\sum_{j = k_0}^{k_1 - 1}(V \circ \sigma - V - A + m)(\sigma^j(x)) < \epsilon \;,
\]
which is \victor{equivalent to saying} that 
\[
\frac{V(\sigma^{k_1}(x)) - V(\sigma^{k_0}(x)) - \epsilon}{k_1 - k_0} + m < \frac{1}{k_1 - k_0}\sum_{j = k_0}^{k_1 - 1}A(\sigma^j(x)) \;.
\]

The above implies that
\[
m \leq \liminf_{k \to \infty} \frac{1}{k - k_0}\sum_{j = k_0}^{k - 1}A(\sigma^j(x)) \;.
\] 

In particular, taking $\epsilon > \sup(I)$, it follows that \victor{
\begin{align}
m 
&\leq \liminf_{k \to \infty} \frac{1}{k}\sum_{j = 0}^{k - 1}A(\sigma^j(x)) \nonumber \\
&\leq \liminf_{k \to \infty} \frac{1}{k}\sum_{j = 0}^{k - 1}\widehat{A}
(\widehat{\sigma}^j(y, x)) 
= \liminf_{k \to \infty} \int_{\widehat{\Sigma_\beta}} \widehat{A} d\widehat{\nu}_k 
\label{max-measure-I} \;.
\end{align}}

On the other hand, for each $k \in \mathbb{N}$ is satisfied \victor{
\begin{align}
\limsup_{k \to \infty} \int_{\widehat{\Sigma_\beta}} \widehat{A} d\widehat{\nu}_k 
&= \limsup_{k \to \infty} \frac{1}{k}\sum_{j = 0}^{k - 1}\widehat{A}
(\widehat{\sigma}^j(y, x)) \nonumber \\
&= \limsup_{k \to \infty} \frac{1}{k}\sum_{j = 0}^{k - 1}A(\sigma^j(x)) 
\leq m \label{max-measure-II} \;.
\end{align}}

Therefore, \victor{by \eqref{max-measure-I}, \eqref{max-measure-II} and the asumption that $\widetilde{\nu}_\infty$ is an accumulation point of the sequence $(\widetilde{\nu}_k)_{k \geq 1}$ in the weak* topology, it follows that
\[
\int_{\widehat{\Sigma_\beta}} \widehat{A} d\widehat{\nu}_\infty = \lim_{k \to \infty} \int_{\widehat{\Sigma_\beta}} \widehat{A} d\widehat{\nu}_k = m \;.
\]}
\end{proof}
\begin{remark}
\label{unique-maximizing-measure}
Observe that under the assumption that the potential $A$ has a unique maximizing measure \victor{which is equivalent to saying that $\widehat{A}$ has a unique measure $\widehat{\mu}_{\max}$ such that $\int_{\widehat{\Sigma_\beta}} \widehat{A} d\widehat{\mu}_{\max} = m$,} it follows that $\widehat{\nu}_\infty = \widehat{\mu}_\infty = \widehat{\mu}_{\max}$, where the measure $\widehat{\mu}_\infty$ is the one obtained in Proposition \ref{natural-extension-proposition}. Therefore, in that case the measure $\widehat{\nu}_\infty$ results in the natural extension of the unique accumulation point $\mu_\infty$ of the family of Gibbs states $(\mu_{t A})_{t > 1}$ to the bilateral $\beta$-shift.
\end{remark}

Now we are able to present the proof of the main result of this paper, \victor{which guarantees} that is satisfied a first level large deviations principle in the setting of $\beta$-shifts via involution kernel and transpose potentials. \victor{We point out that we only prove the large deviations principle in the case when the rate function takes real values. The above, because the extension to rate functions defined on $\overline{\mathbb{R}}$ is a trivial consequence of the case studied here.} The statement of the main result of this paper is the following.

\begin{theorem}
\label{large-deviations-theorem}
Consider $\beta > 1$ and \victor{assume that $\Sigma_\beta$ satisfies the specification property.} Fix a strictly increasing sequence $(t_n)_{n \geq 1}$ satisfying \eqref{calibrated-sub-actions} and assume that the potential $A$ has a unique maximizing measure $\mu_{\infty}$. Suppose that \victor{$A \in \mathcal{H}_\theta(\Sigma_\beta)$ and let $I$ be a rate function such as appears defined in \eqref{upper-sc-function} such that $I(x) > -\infty$ for each $x \in \Sigma_\beta$. Then, for any cylinder $[w] \subset \Sigma_\beta$} is satisfied
\begin{equation*}
\lim_{n \to \infty}\frac{1}{t_n}\log(\mu_{t_n A}([w])) = \sup\{I(x) :\; x \in [w]\} \;.
\end{equation*} 
\end{theorem}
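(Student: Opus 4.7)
The plan is to use Lemma \ref{large-deviations-lemma-F_k} as the starting point, which asserts that for every $k \in \mathbb{N}$,
\[
C := \lim_{n \to \infty}\frac{1}{t_n}\log(\mu_{t_n A}([w])) = \sup\{F_k(y, x) :\; (y, x) \in \Sigma_\beta^\intercal \times [w]\} \;.
\]
Since the left hand side does not depend on $k$, the value $C$ is constant in $k$, and the task becomes identifying $C$ with $\sup\{I(x) :\; x \in [w]\}$. A preliminary observation is that, because $V^\intercal$ is a calibrated sub-action for $A^\intercal$, each term $(V^\intercal \circ \sigma - V^\intercal - A^\intercal + m)(\tau_{x, j+1}(y))$ is non-negative, so the pointwise sequence $k \mapsto F_k(y, x)$ is non-increasing, and consequently $F_\infty(y, x) := \lim_{k \to \infty} F_k(y, x)$ exists.

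The second step is to show that $F_\infty \equiv \widehat{I}$, where $\widehat{I}(y, x) := I(x)$ is the natural extension of the rate function to $\widehat{\Sigma_\beta}$. Setting $\Phi := W - \widehat{V} - \widehat{V^\intercal}$, which satisfies $\Phi \leq \gamma$ by \eqref{sup}, a direct comparison between Lemma \ref{lemma-upper-sc-function} and the definition of $F_k$ gives
\[
F_k(y, x) + \gamma - \Phi(\widehat{\sigma}^k(y, x)) = -\Phi(\widehat{\sigma}^k(y, x)) + W(y, x) - V(x) - V^\intercal(y) - S_k(y, x) \;,
\]
where $S_k(y, x)$ denotes the partial sum in the expression of $F_k$. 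The right hand side converges to $\widehat{I}(y, x)$ by Lemma \ref{lemma-upper-sc-function}, and the existence of $F_\infty$ then forces the existence of the pointwise limit $R(y, x) := \lim_{k \to \infty}\Phi(\widehat{\sigma}^k(y, x))$, together with the identity $F_\infty(y, x) = \widehat{I}(y, x) + R(y, x) - \gamma$. Hence the identification $F_\infty = \widehat{I}$ reduces to verifying $R(y, x) = \gamma$ for every $(y, x) \in \widehat{\Sigma_\beta}$ with $I(x) > -\infty$.

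For this verification, I would first observe that $\Phi \equiv \gamma$ on $\mathrm{supp}(\widehat{\mu}_\infty)$. Indeed, the final estimate in the proof of Lemma \ref{mean-integral-lemma}, applied to an arbitrary point of $\mathrm{supp}(\widehat{\mu}_\infty)$, yields $\Phi \geq \gamma$ at that point, and combined with the universal bound $\Phi \leq \gamma$ this gives equality. Now the uniqueness of the maximizing measure, together with Remark \ref{unique-maximizing-measure} and Lemma \ref{maximizing-measure-lemma}, guarantees that the empirical measures $\widehat{\nu}_k := \frac{1}{k}\sum_{j = 0}^{k - 1}\delta_{\widehat{\sigma}^j(y, x)}$ converge to $\widehat{\mu}_\infty$ in the weak* topology. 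Since $\Phi$ is continuous,
\[
\lim_{k \to \infty}\frac{1}{k}\sum_{j = 0}^{k - 1}\Phi(\widehat{\sigma}^j(y, x)) = \int_{\widehat{\Sigma_\beta}}\Phi\, d\widehat{\mu}_\infty = \gamma \;,
\]
by $\Phi \equiv \gamma$ on $\mathrm{supp}(\widehat{\mu}_\infty)$. On the other hand, any convergent sequence has the same Cesàro limit as its pointwise limit, so $R(y, x) = \gamma$ and therefore $F_\infty = \widehat{I}$.

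The proof concludes via Dini's theorem: each $F_k$ is continuous on the compact space $\widehat{\Sigma_\beta}$, the limit $F_\infty = \widehat{I}$ is continuous (in fact H\"older), and $F_k \searrow F_\infty$ monotonically. Hence $F_k \to F_\infty$ uniformly on the compact subset $\Sigma_\beta^\intercal \times [w]$, giving
\[
C = \lim_{k \to \infty}\sup_{(y, x) \in \Sigma_\beta^\intercal \times [w]}F_k(y, x) = \sup_{(y, x) \in \Sigma_\beta^\intercal \times [w]}\widehat{I}(y, x) = \sup_{x \in [w]}I(x) \;,
\]
where the last equality uses that $\widehat{I}(y, x) = I(x)$ does not depend on $y$. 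I expect the main obstacle to be precisely the verification that $R(y, x) = \gamma$ at every pair $(y, x)$ with $I(x)$ finite; this step requires extracting the rigidity identity $\Phi \equiv \gamma$ on $\mathrm{supp}(\widehat{\mu}_\infty)$ from the internals of Lemma \ref{mean-integral-lemma} and then invoking the uniqueness hypothesis to upgrade the Birkhoff-type convergence of empirical measures into a bona fide pointwise statement about the orbit.
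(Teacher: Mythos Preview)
Your proposal is correct and follows the same architecture as the paper's proof: reduce the identity to showing $\lim_{k\to\infty}F_k=\widehat{I}$ pointwise, which in turn reduces to the key limit $\lim_{k\to\infty}(W-\widehat{V}-\widehat{V^\intercal})(\widehat{\sigma}^k(y,x))=\gamma$, and then invoke Lemma~\ref{mean-integral-lemma}, Lemma~\ref{maximizing-measure-lemma} and Remark~\ref{unique-maximizing-measure} under the uniqueness hypothesis. Your write-up is in fact more complete than the paper's own argument in two places: you make explicit the Ces\`aro step (weak* convergence of the empirical measures to $\widehat{\mu}_\infty$ plus $\Phi\equiv\gamma$ on $\mathrm{supp}(\widehat{\mu}_\infty)$ forces the already-existing pointwise limit $R(y,x)$ to equal $\gamma$), and you justify via Dini why the constant value $\sup F_k$ actually coincides with $\sup\widehat{I}$, a passage the paper states without elaboration.
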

\begin{proof}
As a consequence of Lemma \ref{large-deviations-lemma-F_k}, Lemma \ref{lemma-upper-sc-function} and the equation in \eqref{F_k-function}, it is enough to prove that for any pair \victor{$(y, x) \in \widehat{\Sigma_\beta}$} is satisfied
\begin{equation}
\label{extension-rate-function}
\widehat{I}(y, x) = \lim_{k \to \infty} F_k(y, x) \;.
\end{equation}

\victor{Furthermore, since} $V^\intercal$ is a sub-action associated to the potential $A^\intercal$, it follows that for each pair \victor{$(y, x) \in \widehat{\Sigma_\beta}$} the sequence \victor{ 
\[
k \mapsto -\sum_{j = 0}^{k - 1} (V^\intercal \circ \sigma - V^\intercal - A^\intercal + m)(\tau_{x, j + 1}(y)) \;,
\]}
is decreasing. The above, joint with Lemma \ref{lemma-upper-sc-function}, implies existence of the limit \victor{
\[
\lim_{k \to \infty} (W - \widehat{V} - \widehat{V^\intercal})(\widehat{\sigma}^k(y, x)) \;.
\] }

Hence, in order to prove \eqref{extension-rate-function}, we only need to check the following limit
\begin{equation*}
\lim_{k \to \infty} (W - \widehat{V^\intercal} - \widehat{V})(\tau_{x, k}(y), \sigma^k(x)) = \gamma \;.
\end{equation*}

Actually, \victor{the existence of the above limit is a} consequence of Lemma \ref{mean-integral-lemma}, Lemma \ref{maximizing-measure-lemma} and Remark \ref{unique-maximizing-measure}. The above, because we are assuming that the potential $A$ has a unique maximizing measure $\mu_\infty$. 
\end{proof}

\victor{
\section*{Acknowledgments}
The author would to thank to INCTMat and the Francisco Jos\'e de Caldas Fund by the financial support during part of the development of this paper.
}


\begin{thebibliography}{10}

\bibitem{MR3114331}
A.~Baraviera, R.~Leplaideur, and A.~Lopes.
\newblock {\em Ergodic optimization, zero temperature limits and the max-plus
  algebra}.
\newblock Publica\c{c}\~{o}es Matem\'{a}ticas do IMPA. [IMPA Mathematical
  Publications]. Instituto Nacional de Matem\'{a}tica Pura e Aplicada (IMPA),
  Rio de Janeiro, 2013.
\newblock 29${^{{}}{\rm{o}}}$ Col\'{o}quio Brasileiro de Matem\'{a}tica. [29th
  Brazilian Mathematics Colloquium].

\bibitem{MR2210682}
A.~Baraviera, A.~O. Lopes, and P.~Thieullen.
\newblock A large deviation principle for the equilibrium states of
  {H}\"{o}lder potentials: the zero temperature case.
\newblock {\em Stoch. Dyn.}, 6(1):77--96, 2006.
\newblock https://dx.doi.org/10.1142/S0219493706001657.

\bibitem{MR2864625}
A.~T. Baraviera, L.~M. Cioletti, A.~O. Lopes, J.~Mohr, and R.~R. Souza.
\newblock On the general one-dimensional {$XY$} model: positive and zero
  temperature, selection and non-selection.
\newblock {\em Rev. Math. Phys.}, 23(10):1063--1113, 2011.
\newblock https://dx.doi.org/10.1142/S0129055X11004527.

\bibitem{MR878240}
A.~Bertrand-Mathis.
\newblock D\'{e}veloppement en base {$\theta$}; r\'{e}partition modulo un de la
  suite {$(x\theta^n)_{n\geq 0}$}; langages cod\'{e}s et {$\theta$}-shift.
\newblock {\em Bull. Soc. Math. France}, 114(3):271--323, 1986.

\bibitem{BMP15}
R.~Bissacot, J.~K. Mengue, and E.~P\'erez.
\newblock A large deviation principle for gibbs states on countable markov
  shifts at zero temperature.
\newblock 2015.
\newblock arXiv:1612.05831.

\bibitem{MR1020481}
F.~Blanchard.
\newblock {$\beta$}-expansions and symbolic dynamics.
\newblock {\em Theoret. Comput. Sci.}, 65(2):131--141, 1989.
\newblock https://doi-org/10.1016/0304-3975(89)90038-8.

\bibitem{MR3624405}
V.~Climenhaga, D.~J. Thompson, and K.~Yamamoto.
\newblock Large deviations for systems with non-uniform structure.
\newblock {\em Trans. Amer. Math. Soc.}, 369(6):4167--4192, 2017.
\newblock http://dx.doi.org/10.1090/tran/6786.

\bibitem{MR2571413}
A.~Dembo and O.~Zeitouni.
\newblock {\em Large deviations techniques and applications}, volume~38 of {\em
  Stochastic Modelling and Applied Probability}.
\newblock Springer-Verlag, Berlin, 2010.
\newblock Corrected reprint of the second (1998) edition.

\bibitem{zbMATH03514123}
M.~{Denker}, C.~{Grillenberger}, and K.~{Sigmund}.
\newblock {\em {Ergodic theory on compact spaces}}, volume 527.
\newblock Springer, Cham, 1976.
\newblock https://dx.doi-org/10.1007/BFb0082364.

\bibitem{MR386024}
M.~D. Donsker and S.~R.~S. Varadhan.
\newblock Asymptotic evaluation of certain {M}arkov process expectations for
  large time. {I}. {II}.
\newblock {\em Comm. Pure Appl. Math.}, 28:1--47; ibid. 28 (1975), 279--301,
  1975.
\newblock https://dx.doi-org/10.1002/cpa.3160280102.

\bibitem{MR2189669}
R.~S. Ellis.
\newblock {\em Entropy, large deviations, and statistical mechanics}.
\newblock Classics in Mathematics. Springer-Verlag, Berlin, 2006.
\newblock Reprint of the 1985 original.

\bibitem{MR1153488}
P.~Erd\H{o}s, M.~Horv\'{a}th, and I.~Jo\'{o}.
\newblock On the uniqueness of the expansions {$1=\sum q^{-n_i}$}.
\newblock {\em Acta Math. Hungar.}, 58(3-4):333--342, 1991.
\newblock https://dx.doi.org/10.1007/BF01903963.

\bibitem{MR1078082}
P.~Erd\"{o}s, I.~Jo\'{o}, and V.~Komornik.
\newblock Characterization of the unique expansions
  {$1=\sum^\infty_{i=1}q^{-n_i}$} and related problems.
\newblock {\em Bull. Soc. Math. France}, 118(3):377--390, 1990.

\bibitem{MR1860762}
A.~Fan and Y.~Jiang.
\newblock On {R}uelle-{P}erron-{F}robenius operators. {I}. {R}uelle theorem.
\newblock {\em Comm. Math. Phys.}, 223(1):125--141, 2001.
\newblock https://dx.doi.org/10.1007/s002200100538.

\bibitem{MR1860763}
A.~Fan and Y.~Jiang.
\newblock On {R}uelle-{P}erron-{F}robenius operators. {II}. {C}onvergence
  speeds.
\newblock {\em Comm. Math. Phys.}, 223(1):143--159, 2001.
\newblock https://dx.doi-org/10.1007/s002200100539.

\bibitem{MR1851269}
P.~Glendinning and N.~Sidorov.
\newblock Unique representations of real numbers in non-integer bases.
\newblock {\em Math. Res. Lett.}, 8(4):535--543, 2001.
\newblock https://dx.doi.org/10.4310/MRL.2001.v8.n4.a12.

\bibitem{MR1484730}
B.~P. Kitchens.
\newblock {\em Symbolic dynamics}.
\newblock Universitext. Springer-Verlag, Berlin, 1998.
\newblock One-sided, two-sided and countable state Markov shifts.

\bibitem{MR3170635}
A.~O. Lopes and J.~K. Mengue.
\newblock Selection of measure and a large deviation principle for the general
  one-dimensional {$XY$} model.
\newblock {\em Dyn. Syst.}, 29(1):24--39, 2014.
\newblock https://dx.doi.org/10.1080/14689367.2013.835792.

\bibitem{MR3377291}
A.~O. Lopes, J.~K. Mengue, J.~Mohr, and R.~R. Souza.
\newblock Entropy and variational principle for one-dimensional lattice systems
  with a general {\it a priori} probability: positive and zero temperature.
\newblock {\em Ergodic Theory Dynam. Systems}, 35(6):1925--1961, 2015.
\newblock https://dx.doi.org/10.1017/etds.2014.15.

\bibitem{zbMATH07220150}
A.~O. {Lopes} and V.~{Vargas}.
\newblock {Gibbs states and Gibbsian specifications on the space
  \(\mathbb{R}^\mathbb{N}\)}.
\newblock {\em {Dyn. Syst.}}, 35(2):216--241, 2020.
\newblock https://dx.doi.org/10.1080/14689367.2019.1663789.

\bibitem{MR0142719}
W.~Parry.
\newblock On the {$\beta $}-expansions of real numbers.
\newblock {\em Acta Math. Acad. Sci. Hungar.}, 11:401--416, 1960.
\newblock https://dx.doi.org/10.1007/BF02020954.

\bibitem{MR1085356}
W.~Parry and M.~Pollicott.
\newblock Zeta functions and the periodic orbit structure of hyperbolic
  dynamics.
\newblock {\em Ast\'erisque}, (187-188):268, 1990.

\bibitem{MR2109476}
C.-E. Pfister and W.~G. Sullivan.
\newblock Large deviations estimates for dynamical systems without the
  specification property. {A}pplications to the {$\beta$}-shifts.
\newblock {\em Nonlinearity}, 18(1):237--261, 2005.
\newblock http://dx.doi.org/10.1088/0951-7715/18/1/013.

\bibitem{MR0097374}
A.~R\'{e}nyi.
\newblock Representations for real numbers and their ergodic properties.
\newblock {\em Acta Math. Acad. Sci. Hungar}, 8:477--493, 1957.
\newblock https://dx.doi.org/10.1007/BF02020331.

\bibitem{zbMATH02074190}
N.~{Sidorov}.
\newblock {Almost every number has a continuum of \(\beta\)-expansions}.
\newblock {\em {Am. Math. Mon.}}, 110(9):838--842, 2003.
\newblock http://dx.doi.org/10.2307/3647804.

\bibitem{SV20}
R.~R. Souza and V.~Vargas.
\newblock Existence of gibbs states and maximizing measures on a general
  one-dimensional lattice system with markovian structure.
\newblock 2020.
\newblock arXiv:2003.06661.

\bibitem{MR203230}
S.~R.~S. Varadhan.
\newblock Asymptotic probabilities and differential equations.
\newblock {\em Comm. Pure Appl. Math.}, 19:261--286, 1966.
\newblock https://dx.doi-org/10.1002/cpa.3160190303.

\bibitem{MR1964483}
C.~Villani.
\newblock {\em Topics in optimal transportation}, volume~58 of {\em Graduate
  Studies in Mathematics}.
\newblock American Mathematical Society, Providence, RI, 2003.
\newblock http://dx.doi.org/10.1090/gsm/058.

\bibitem{MR0466492}
P.~Walters.
\newblock Equilibrium states for {$\beta $}-transformations and related
  transformations.
\newblock {\em Math. Z.}, 159(1):65--88, 1978.
\newblock https://dx.doi.org/10.1007/BF01174569.

\bibitem{MR648108}
P.~Walters.
\newblock {\em An introduction to ergodic theory}, volume~79 of {\em Graduate
  Texts in Mathematics}.
\newblock Springer-Verlag, New York-Berlin, 1982.

\end{thebibliography}
\end{document}